\def\E{{\mathbb E}}
\def\P{{\mathbb P}}
\def\R{{\mathbb R}}
\def \L{\mathbb{L}}
\def\bb{\bm{b}}
\def\be{\bm{e}}
\def\bF{\mathbb{F}}
\def\bmu{\bar \mu}
\def \cf{\mathcal{F}}
\def \cA{\mathcal{A}}
\def\expp#1{\mathop {\mathrm{e}^{ #1}}}
\def\norm#1{\left|#1\right|}
\def\inv#1{\frac{1}{#1}}
\def\ind#1{\; {\mathbf 1}_{\{#1\}}}
\numberwithin{equation}{section}
\def\sphere{\mathcal{S}(\R^3)}
\theoremstyle{plain}
\newtheorem{thm}{Theorem}
\newtheorem{prop}[thm]{Proposition}
\newtheorem{lemma}[thm]{Lemma}
\newtheorem{corollary}[thm]{Corollary}
\newtheorem{remark}[thm]{Remark}
\begin{document}

\title{Long time behaviour of a stochastic nano particle}

\author{Pierre \'Etor\'e\thanks{Grenoble INP, Laboratoire Jean Kuntzmann, 51 rue des
Mathématiques, 38041 Grenoble cedex 9, France; pierre.etore@imag.fr} \and St\'ephane
Labb\'e\thanks{Université Grenoble 1, Laboratoire Jean Kuntzmann, 51 rue des
Mathématiques, 38041 Grenoble cedex 9, France; stephane.labbe@imag.fr (Grant
HM-MAG, RTRA Foundation) } \and J\'er\^ome
Lelong\thanks{Grenoble INP, Laboratoire Jean Kuntzmann, 51 rue des
Mathématiques, 38041 Grenoble cedex 9, France; jerome.lelong@imag.fr}\\
}

\date{\today}

\maketitle

\begin{abstract}  
  In this article, we are interested in the behaviour of a single ferromagnetic
  mono-domain particle submitted to an external field with a stochastic
  perturbation. This model is the first step toward the mathematical
  understanding of thermal effects on a ferromagnet.  In a first part, we
  present the stochastic model and prove that the associated stochastic
  differential equation is well defined. The second part is dedicated to the
  study of the long time  behaviour of the magnetic moment and in the third part
  we prove that the stochastic perturbation induces a non reversibility
  phenomenon.  Last, we illustrate these results through numerical simulations
  of our stochastic model.\\ The main results presented in this article are on
  the one hand the rate of convergence of the magnetization toward the unique
  stable equilibrium of the deterministic model and on the other  hand a sharp
  estimate of the hysteresis phenomenon induced by the stochastic perturbation
  (remember that with no perturbation, the magnetic moment remains constant).
  \\

  \noindent \textbf{Keywords}: convergence rate, stochastic dynamical systems,
  long time study, magnetism, hysteresis. 

  \noindent \textbf{AMS Classification}: 60F10, 60F15, 65Z05 
\end{abstract}

\section{Introduction} Thermal effects in ferromagnetic materials are essential
in order to understand their behaviour at ambient temperature or, more
critically, in electronic devices where the Joule effect induces high heat
fluxes. This effect is commonly modeled by the introduction of a noise at
microscopic scale on the magnetic moment direction and at the mesoscopic scale
by a transition of behaviour. In ferromagnetic materials, the transition between
the non-linear behaviour and the linear behaviour is managed by the struggle
between the Heisenberg interaction and the disorder induced by the heating.
This model explains the critical temperatures such as the Curie temperature for
ferromagnetic materials. In this context, it is essential to understand the
impact of introducing stochastic perturbations in deterministic models of
ferromagnetic materials such as the micromagnetism (see~\cite{Brown:magnet,Brown:Microm}).\\

The understanding of this phenomena is a key point in order to simulate
realistic ferromagnetic devices such as micro electronic circuits.
Furthermore, heating has a real effect on the microstructure
dynamics in magnets; then, efficiently controlled, the dynamics of microstructures
could accelerate processes such as the magnetization switching, which is the
basics of magnetic recording techniques.\\

During the last decade, several studies have been initialized in several
articles by physicists (e.g.
\cite{Mercer:Atomic,Zheng:Atomistic,Raikher:Dynamic,Atkison:Magnetic,Raikher:Magnetization,Scholz:Micromagnetic,Martinez:Micromagnetic,Neil:Modeling}),
but to our knowledge very few if no mathematical models justifying this kind of
effects at the micro-scale have been developed for continuous state
magnetic spins. Numerous studies focused on the discrete state spin approach
(in particular the Glauber dynamic model, see for instance \cite{MR2642889}),
but in our context, we try to catch the magnetization defects induced by thermal
effects in ferromagnetic materials seen from the dynamical point of view of the
Larmor precession equation.  In this article, our goal is to improve the
understanding of thermal effects in ferromagnets. To achieve this goal, we focus
this first study on the dynamic of a single magnetic moment submitted to a
stochastic perturbation.  Our main aim is to characterize precisely the dynamic
of the moment, giving estimates and general behaviour in long time. In
particular, we will exhibit an hysteresis behaviour of the magnetization in our
model.\\

The model we are studying mimics the behaviour of a single magnetic moment
$\mu(t)$ (function from $\mathbb{R}$ into
$S(\mathbb{R}^3)=\{u\in\mathbb{R}^3;\;\vert \mu\vert=1\}$) submitted to an
external field $b$. The dynamic of such a system is, at the micro-scale,
described by the Larmor precession equation 
\begin{eqnarray*}
  \frac{d\mu}{dt}=-\mu\wedge b.
\end{eqnarray*}
Nevertheless, this equation is non dissipative and, in order to make the
theoretical study easier, we introduce a dissipative part using the Landau-Lifchitz
equation
\begin{eqnarray*}
  \frac{d\mu}{dt}=-\mu\wedge b-\alpha \mu\wedge (\mu\wedge b),
\end{eqnarray*}
where $\alpha$ is a positive real constant and we set the initial condition
$\mu(0)=\mu_0 \in S(\mathbb{R}^3)$.  We point out two major properties of this
system
\begin{enumerate}[label=\textit{\roman{enumi}.}]
  \item {$\forall t\in\mathbb{R}, \vert\mu(t)\vert=1$,}
  \item {$\displaystyle\forall t\in\mathbb{R},\; \frac{d}{dt}(\mu(t)\cdot b)\ge 0$.}
\end{enumerate}
The first property, which is definitely essential, will have to be preserved by
the stochastic system and the second property is the energy decreasing induced
by the introduction of the dissipation term. The dynamic of this deterministic
system is classical; in fact, one knows that $\displaystyle
\lim_{t\rightarrow\infty}\mu(t)=\frac{1}{\vert b\vert}b$ provided that $\mu(0)
\ne -b$.  In this work, we will develop such a result for the stochastic system.
In order to build this stochastic system, the first question is how to introduce
the stochastic perturbation in the deterministic system. We want to model the
thermal effects which are external perturbations of the magnetic moment. In
fact, this perturbation could be modeled has an external perturbation field. In
the sequel, we will choose to build a stochastic system by perturbing the
external field with a Brownian motion. We will write down 
\begin{equation*}
  \begin{cases}
    dY_t &=  - \mu_t \wedge (b\;  dt + \varepsilon\; dW_t) - \alpha \mu_t \wedge
    (\mu_t \wedge (b \; dt + \varepsilon \; dW_t) ) \\
    Y_0&=y\in \sphere.
  \end{cases}
\end{equation*}
where $\varepsilon$ is a strictly positive real number and $W$ a standard
Brownian motion with values in $\R^3$. But, an easy
computation of $d(|Y_t|^2)$ using It\^o's formula shows that 
the process $Y$ will not stay in $S(\mathbb{R}^3)$, then, in order to preserve
this essential behaviour, we have to renormalise the previous equation and set
$$
\mu_t  = \frac{Y_t} {|Y_t|}.
$$
Given this system, we prove the following results 
\begin{enumerate}[label=\textit{\roman{enumi}.}]
  \item $\displaystyle \mu_t\cdot b \xrightarrow[t \rightarrow \infty]{}
    \vert b\vert$, a.s.,

  \item  $\lim_{t \longrightarrow \infty} \sqrt{t} \; \E[ \norm{\norm{b} - \mu_t
    \cdot b} ]$ exists and we can compute its value.
\end{enumerate}

Note that these results are only valid for $\alpha>0$, as for $\alpha=0$, it is
easy to show that the function $e(t) = \E(\mu_t \cdot b)$ satisfies the
following ordinary differential equation $e'(t) = -e(t) \frac{h'(t)}{h(t)}$.
Hence, $e(t) = \frac{e(0)}{h(t)} \xrightarrow[t \to \infty]{} 0$. This
contradicts the a.s. convergence of $\mu_t$ to $\frac{b}{\norm{b}}$.

\begin{enumerate}[label=\textit{\roman{enumi}.},resume]
  \item When $\mu$ is submitted to a time varying external field, an hysteresis
    phenomenon appears. If we consider $\bb \in \sphere$ and let $b$ linearly vary
    between $+\bb$ and $-\bb$ over the time interval $[0,T]$, then 
    $\E(\mu_t \cdot \bb)$ is bounded from below by $\frac{1}{\sqrt{1+c t}}$ for
    $t \le T/2$ where $c$ is a constant depending only on $\varepsilon$ and
    $\alpha$.
\end{enumerate}
First, we make precise the derivation of the stochastic model and discuss the
physical differences between the Itô interpretation or the Stratonovich one of
the noise term. Then, we lead a detailed study of its asymptotic behaviour and
in particular we point out an hysteresis phenomenon. This phenomenon is obtained
by slow variations of the external field such that the dynamic of relaxation of
the magnetization toward this field becomes instantaneous when the speed ratio
of the external excitation goes to zero. The results shown in this article are
finally illustrated by numerical simulations.

\paragraph{Notations:}

\begin{itemize}
  \item For $a$ and $b$ in $\R^3$ we denote by $a\cdot b$ their
    scalar product,  $a\cdot b=\sum_{i=1}^3 a^ib^i$. 
  \item For $a$ in $\R^3$, we denote by $|a| = \sqrt{a \cdot a}$ the Euclidean
    norm of $a$.
  \item We like to encode elements of $\R^3$ as column vectors. For $x \in \R^3$,
    $x^*$ is a row vector. Similarly, we use the star notation ``${}^*$'' to
    denote the transpose of matrices.
  \item If $H=(H_t)_{t\geq 0}$ is a $3$-dimensional $\bF-$adapted process
    satisfying $\int_0^t |H_u|^2 du < \infty$ a.s. for all $t$,  we may write
    $\int_0^t H_u\cdot dW_u$ for $\sum_{i=1}^3 \int_0^t H^i_u\,dW^i_u$ and use the
    differential form $H_t\cdot dW_t$ for $\sum_{i=1}^3 H^i_t\,dW^i_t$.
\end{itemize}

\section{Mathematical model}

\subsection{First properties of the model}

Let $(\Omega, \cA, \P)$ be a probability space.  We consider a  standard Brownian
motion $W$ defined on this space with values in $\R^3$ and denote by $\bF =
(\cf_t)_{t \ge 0}$ its natural filtration augmented with $\P$null sets.

Let $b\in\R^3$ be the magnetic field. We model the $\sphere$-valued magnetic
moment process $\mu=(\mu_t)_{t\geq 0}$ by the following coupled stochastic
differential equation (SDE in short)
\begin{equation}
  \label{eq:sys_sto}
  \begin{cases}
    dY_t &=  - \mu_t \wedge (b\;  dt + \varepsilon\; dW_t) - \alpha \mu_t \wedge
    (\mu_t \wedge (b \; dt + \varepsilon \; dW_t))  \\
    \mu_t & = \frac{Y_t} {|Y_t|}\\
    Y_0&=y\in \sphere,
  \end{cases}
\end{equation}
where $\alpha >0$ is the magnitude of the damping term and $\varepsilon>0$ is
the magnitude of the noise term.

The term $\mu_t\wedge dW_t$ in \eqref{eq:sys_sto} is naturally defined by
introducing the antisymmetric operator $L : \R^3 \longmapsto \R^{3 \times 3}$
associated to the vector product in $\R^3$
\begin{equation*}
  L(x) = \left(
  \begin{array}{ccc}
    0 & -x^3 & x^2\\
    x^3& 0&-x^1 \\
    -x^2 & x^1 & 0\\
  \end{array}
  \right).
\end{equation*}
Hence, for a $3$-dimensional $\bF-$adapted process $H$ satisfying $\int_0^t
\norm{H_s}^2 ds<\infty$ a.s. for all $t>0$, the process $\int_0^t H_s\wedge
dW_s$ is defined by $\int_0^t L(H_s) dW_s$, which is a standard
multi--dimensional It\^o stochastic integral. When dealing with the differential
expression, we will either write $H_t \wedge dW_t$ or $L(H_t) dW_t$.

\begin{prop}\label{prop:norm_Y}
  Let $(Y, \mu)$ be a pair of processes satisfying~\eqref{eq:sys_sto}, then
  \begin{equation*}
    d\norm{Y_t}^2=2\varepsilon^2 (\alpha^2 + 1) dt
  \end{equation*}
  and therefore $\norm{Y_t} = \sqrt{2 \varepsilon^2 (\alpha^2 + 1)t + 1}$ is non random.
\end{prop}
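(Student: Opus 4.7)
The plan is to apply Itô's formula to $\norm{Y_t}^2$, exploit the collinearity between $Y_t$ and $\mu_t$ to show that the ``$Y\cdot dY$'' contribution vanishes identically, and then to evaluate the quadratic variation in closed form using the algebraic identities satisfied by the antisymmetric operator $L$ on a unit vector.

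I would first write the SDE as $dY_t = \beta_t\,dt + \Sigma_t\,dW_t$ with drift $\beta_t = -\mu_t\wedge b - \alpha\,\mu_t\wedge(\mu_t\wedge b)$ and diffusion matrix $\Sigma_t = -\varepsilon L(\mu_t) - \alpha\varepsilon L(\mu_t)^2$. Itô's formula then gives
$$d\norm{Y_t}^2 = 2\,Y_t\cdot dY_t + \text{tr}\!\left(\Sigma_t\Sigma_t^*\right)dt.$$
Since $Y_t = \norm{Y_t}\mu_t$ and $\mu_t\wedge v \perp \mu_t$ for every $v\in\R^3$, one has $Y_t\cdot \beta_t = 0$ and $Y_t^*\Sigma_t = 0$ (equivalently $\mu_t^* L(\mu_t)=0$ by antisymmetry, hence also $\mu_t^* L(\mu_t)^2 = 0$). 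Thus $Y_t\cdot dY_t \equiv 0$ and only the trace term survives.

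The remaining task is the computation of $\text{tr}(\Sigma_t\Sigma_t^*)$. Using $L(\mu)^* = -L(\mu)$ (so $(L(\mu)^2)^* = L(\mu)^2$), the cross terms in $\Sigma_t\Sigma_t^*$ appear with opposite signs and cancel, leaving $-\varepsilon^2 L(\mu_t)^2 + \alpha^2\varepsilon^2 L(\mu_t)^4$. The classical identity $a\wedge(a\wedge v) = (a\cdot v)a - \norm{a}^2 v$ specialises, since $\norm{\mu_t}=1$, to $L(\mu_t)^2 = \mu_t\mu_t^* - I$; squaring this yields $L(\mu_t)^4 = I - \mu_t\mu_t^* = -L(\mu_t)^2$. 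Consequently $\Sigma_t\Sigma_t^* = \varepsilon^2(1+\alpha^2)(I - \mu_t\mu_t^*)$, whose trace equals $2\varepsilon^2(1+\alpha^2)$. Integrating the resulting equation $d\norm{Y_t}^2 = 2\varepsilon^2(1+\alpha^2)\,dt$ from the deterministic initial value $\norm{Y_0} = \norm{y} = 1$ gives the explicit formula.

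There is no serious obstacle in the argument; it is essentially linear algebra for the operator $L$ combined with one Itô expansion. The conceptual point worth emphasising is that the collinearity $Y_t \parallel \mu_t$ forces the local martingale part of $\norm{Y_t}^2$ to vanish identically, not merely in expectation, so that $\norm{Y_t}$ is genuinely deterministic rather than just a process with a deterministic mean.
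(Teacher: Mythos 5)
Your proof is correct and follows essentially the same route as the paper: Itô's formula, the observation that $Y_t\cdot dY_t$ vanishes because $Y_t$ is collinear with $\mu_t$ and every term of $dY_t$ is orthogonal to $\mu_t$, and then a computation of the trace of the diffusion quadratic form using $L(\mu)^*=-L(\mu)$, $L(\mu)\mu=0$ and $\norm{\mu_t}=1$. Your organization of the algebra via $L(\mu)^2=\mu\mu^*-I$ and $L(\mu)^4=-L(\mu)^2$ is a slightly tidier packaging of the paper's expansion of $A A^*$ with $A(\mu)=\alpha I-\alpha\mu\mu^*-L(\mu)$, but it is the same argument.
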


\begin{remark}
  The fact that $\norm{Y_t}$ is non random is definitely essential in all the
  following computations. In particular, we deduce from this result that
  $\norm{Y_t}$ has finite variation.
\end{remark}

\begin{proof}
  Using It\^o's lemma we have
  $$
  d\norm{Y_t}^2=2Y_t\cdot dY_t+\sum_{i=1}^3d\langle Y^i,Y^i\rangle_t=\sum_{i=1}^3d\langle Y^i,Y^i\rangle_t,
  $$
  where we have used the fact that $Y_t$ and $dY_t$ are orthogonal. 
  But, using the identity $a\wedge(b\wedge c)=(a\cdot c)b-(a\cdot b)c$, we have
  \begin{align*}
    dY_t&=\varepsilon\big[  -(\mu_t\wedge dW_t) - \alpha ( (\mu_t\cdot
    dW_t)\mu_t-(\mu_t\cdot\mu_t)dW_t) \big]+\ldots  dt\\
    &=\varepsilon  A(\mu_t)\,dW_t+\ldots dt,
  \end{align*}
  where we have set $A(\mu)=\alpha I-\alpha(\mu\mu^*)-L(\mu)$ and used $|\mu_t|=1$.
  Thus, 
  \begin{align*}
    d\langle Y,Y\rangle_t&=    \varepsilon^2  AA^*(\mu_t)dt \\
    &=\varepsilon^2\Big[ \alpha^2I-\alpha^2(\mu_t\mu_t^*)+\alpha L(\mu_t) -\alpha^2(\mu_t\mu_t^*)+\alpha^2(\mu_t\mu_t^*)(\mu_t\mu_t^*) -\alpha(\mu_t\mu_t^*)L(\mu_t)  \\
    &\hspace{1cm}-\alpha L(\mu_t)+\alpha L(\mu_t)(\mu_t\mu_t^*)-L(\mu_t)L(\mu_t)\Big]\,dt\\
    &=\varepsilon^2\Big[  \alpha^2I-\alpha^2(\mu_t\mu_t^*)+L(\mu_t)L^*(\mu_t)  \Big]dt,
  \end{align*}
  where we have used $L(\mu)\mu=0$, $L^*(\mu)=-L(\mu)$ and again $\mu_t^*\mu_t=1$.
  Thus, for each $1\leq i\leq 3$ we have
  $$
  d\langle Y^i,Y^i\rangle_t=\varepsilon^2\big[\alpha^2-\alpha^2(\mu^i_t)^2+\sum_{k=1}^3(L_{ik}(\mu_t))^2\big].$$

  Then, summing over $i$, we get
  \begin{align*}
    d\norm{Y_t}^2&=\varepsilon^2\big[3\alpha^2-\alpha^2\norm{\mu_t}^2 
    +\sum_{i,j=1}^3(L_{ij}(\mu_t))^2\Big]dt\\
    &= \varepsilon^2[3\alpha^2-\alpha^2\norm{\mu_t}^2+2\norm{\mu_t}^2]dt,
  \end{align*}
  The result ensues by remembering that $\norm{\mu_t}^2 = 1$.
\end{proof}

With the help of Proposition~\ref{prop:norm_Y}, we can establish the SDE
satisfied by the one dimensional process $(\mu_t \cdot b)_t$.  We introduce the
function 
\begin{equation}
  \label{eq:h}
  h(t) = \norm{Y(t)} = \sqrt{2 \varepsilon^2 (\alpha^2 + 1)t + 1}.
\end{equation}
Since $\norm{Y_t}$ is non random, we deduce from Equation~\eqref{eq:sys_sto}
that 
\begin{align}
  \label{eq:sde_mu}
  d\mu_t & = - \frac{\mu_t h'(t)}{h(t)} dt + \frac{dY_t}{h(t)} \nonumber \\
  & =  - \frac{\mu_t h'(t)}{h(t)} dt - \frac{1}{h(t)} \left( \mu_t\wedge (b\;
  dt + \varepsilon\; dW_t) + \alpha \mu_t \wedge (\mu_t
  \wedge (b \; dt + \varepsilon \; dW_t)) \right) \nonumber \\
  d\mu_t   & =  - \frac{\mu_t h'(t) + \mu_t \wedge
  b + \alpha ( \mu_t (\mu_t \cdot b) - b )}{h(t)}
  dt - \frac{\varepsilon}{h(t)} \left( L(\mu_t) + \alpha (\mu_t \mu_t^*
  -I)\right) dW_t 
\end{align}
By taking the scalar product with $b$, we get
\begin{align}
  \label{eq:sde_scalar}
  d( \mu_t \cdot b) & =  - (\mu_t \cdot b) \frac{h'(t)}{h(t)} dt -
  \frac{\alpha}{h(t)} \left( (\mu_t \cdot
  b)^2 - \norm{b}^2 \right) dt \nonumber \\ 
  & \qquad - \frac{\varepsilon}{h(t)} \left( (\mu_t
  \wedge  \; dW_t) \cdot b + \alpha (\mu_t \cdot b) (\mu_t \cdot dW_t) -
  \alpha (b \cdot dW_t) \right) \nonumber \\
  d( \mu_t \cdot b) & =  - (\mu_t \cdot b) \frac{h'(t)}{h(t)} dt -
  \frac{\alpha}{h(t)} \left( (\mu_t \cdot
  b)^2 - \norm{b}^2 \right) dt \nonumber \\ 
  & \qquad - \frac{\varepsilon}{h(t)} \big( -L(\mu_t) b
  + \alpha ((\mu_t \cdot b) \mu_t -b)  \big) \cdot dW_t
\end{align}
We may call this equation the SDE satisfied by $\mu_t \cdot b$ whereas it is not
an SDE properly speaking since the r.h.s member actually depends on all the
components of $\mu_t$ and not only on $\mu_t \cdot b$. Nonetheless, we may use
this abuse of terminology throughout the paper.
\begin{remark}[Remark on the existence and uniqueness of solutions to
  Equation~\eqref{eq:sys_sto}]
  Let us consider the following coupled SDE
  \begin{subequations}
    \label{eq:sys_sto_coupled}
    \begin{align}
      \label{eq:sys_sto_coupled_Y}
      dY_t &=  - \mu_t \wedge (b\;  dt + \varepsilon\; dW_t) - \alpha \mu_t \wedge
      (\mu_t \wedge (b \; dt + \varepsilon \; dW_t))  \\
      \label{eq:sys_sto_coupled_mu}
      d\mu_t   & =  - \frac{\mu_t h'(t) + \mu_t \wedge
      b + \alpha ( \mu_t (\mu_t \cdot b) - b )}{h(t)}
      dt - \frac{\varepsilon}{h(t)} \left( L(\mu_t) + \alpha (\mu_t \mu_t^*
      -I)\right) dW_t \\
      Y_ 0 & = \mu_ 0 \in \sphere \nonumber
    \end{align}
  \end{subequations}
  This system is actually decoupled as the SDE on $\mu$ is autonomous (this has
  only been possible because $\norm{Y_t}$ is non random). The existence and
  uniqueness of a solution to Equation~\eqref{eq:sys_sto_coupled} boil down to
  the ones of Equation~\eqref{eq:sys_sto_coupled_mu}. By computing
  $d(|\mu_t|^2)$, we deduce that if there exists a solution $\mu$ to
  Equation~\eqref{eq:sys_sto_coupled_mu}, $\norm{\mu_t}^2 = 1$ a.s. for all t.
  Hence, it is sufficient to check the standard global Lipschitz behaviour of
  the coefficients on $\R_+ \times \sphere$ to prove existence and uniqueness of
  a strong solution to Equation~\eqref{eq:sys_sto_coupled_mu}. 

  We have already seen above that if a pair $(Y, \mu)$ is solution of
  Equation~\eqref{eq:sys_sto}, it also solves
  Equation~\eqref{eq:sys_sto_coupled}. 

  Conversely, if $(Y, \mu)$ is the unique strong solution of
  Equation~\eqref{eq:sys_sto_coupled}, it is clear that $\norm{Y_t} = h(t)$ by
  following the proof of Proposition~\ref{prop:norm_Y} and moreover the
  computation of $d(Y_t / \norm{Y_t})$ shows that the process $(Y_t /
  \norm{Y_t})_t$ solves the same SDE as $\mu$, hence for all $t$ $\mu_t =
  \frac{Y_t}{\norm{Y_t}}$ a.s. This last argument proves that
  Equations~\eqref{eq:sys_sto} and~\eqref{eq:sys_sto_coupled} have the same
  solutions. Therefore, we deduce that Equation~\eqref{eq:sys_sto} admits a
  unique strong solution denoted by $(Y, \mu)$ in the sequel.
\end{remark}

\subsection{Why we did not choose a Stratonovich rule for the perturbation.}

One may argue that if we had written the stochastic perturbation in a
Stratonovich way, the norm of the stochastic magnetic moment would have remained
constant by construction. This may sound as a favorable argument to go the
Stratonovich way but while digging into the two approaches, it eventually
becomes clear that the Itô rule and the Stratonovich do not model the same
physical phenomenon. This investigation has led us to add a few results on the
Stratonovich approach.

\paragraph{The Stratonovich stochastic perturbation.}
Let $\partial$ denote the Stratonovich differential operator. In this paragraph, the
process $(\bmu_t)_t$ denotes the stochastic system with a Stratonovich perturbation.
\begin{align}
  \label{eq:sys_sto_strato}
  \partial \bmu_t &=  - \bmu_t \wedge (b\;  \partial t + \varepsilon\; \partial W_t) 
  - \alpha \bmu_t \wedge \bmu_t \wedge (b \; \partial t + \varepsilon \;
  \partial W_t), \quad \bmu_0=y\in \sphere.
\end{align}
The dynamics of $(\bmu_t)_t$ can be rewritten using the operator $A : \R^3
\longrightarrow \R^{3 \times 3}$ defined by $A(x)=\alpha I-\alpha x x^* - L(x)$
\begin{align}
  \label{eq:sys_sto_strato2}
  \partial \bmu_t &=  A(\bmu_t) b\; \partial t + \varepsilon A(\bmu_t) \partial W_t
\end{align}
Now, we turn this Stratonovich SDE into an Itô SDE (see~\cite[V.30]{rogers00})
\begin{align*}
  d \bmu_t &=   A(\bmu_t) b\; dt + \varepsilon A(\bmu_t) d W_t + \frac{1}{2}
  \varepsilon^2 \sum_{q=1}^3 \sum_{j=1}^3 (A_{jq}  D_j (A_{iq}))(\bmu_t),
\end{align*}
where $D_j$ denotes the partial derivative with respect to the $j-th$ component.

\begin{align*}
  \sum_{q=1}^3 \sum_{j=1}^3 (A_{jq}  D_j (A_{iq}))(x) =  \sum_{j=1}^3 (D_j A)
  A^*_{\cdot j} (x).
\end{align*}
Let us compute $D_j A$ by using the fact that $D_j(x) = \be_j$, where $\be_j$
is the $j-th$ vector of the canonical basis.
\begin{align*}
  D_j A (x) & = -\alpha (D_j x x^* + x D_jx^* ) - L(D_j (x)) \\
  & = -\alpha (\be_j  x^* + x \be_j^* ) - L(\be_j).
\end{align*}
Then, we get
\begin{align*}
  \sum_{j=1}^3 (D_j A) A^*_{\cdot j} (x) &= \sum_{j=1}^3 \left( -\alpha
  (\be_j  x^* + x \be_j^* ) - L(\be_j) \right) \left(\alpha \be_j -\alpha x x_j -
  L(e_j) x \right) \\
  &= \sum_{j=1}^3 \alpha^2 (-x_j \be_j  - x + x_j \be_j  + x_j^2 x) +
  L(\be_j) L(\be_j) x \\
  &= -2 \alpha^2 x + \sum_{j=1}^3 (\be_j \cdot x) \be_j - (\be_j \cdot \be_j ) x \\
  &= -2 (\alpha^2 + 1) x.
\end{align*}
Hence, the process $(\bmu_t)_t$ solves the following Itô SDE
\begin{align*}
  d \bmu_t &=   (A(\bmu_t) b  - \varepsilon^2 (\alpha^2 + 1) \bmu_t)  d t +
  \varepsilon A(\bmu_t) d W_t.
\end{align*}
We easily check that $\norm{\bmu_t}=1$, as expected.
From this equation, we compute 
\begin{align*}
  d( \bmu_t \cdot b) & =   \left\{ \alpha \left(-(\bmu_t \cdot b)^2 +
  \norm{b}^2 \right) - \varepsilon^2 (\alpha^2 +1) \bmu_t \cdot b \right\} dt
  \\
  & \quad - \varepsilon \big( -L(\bmu_t) b + \alpha ((\bmu_t \cdot b) \bmu_t -b)
  \big) \cdot dW_t.
\end{align*}
It is easy to check that 
\begin{equation*}
  (d(\bmu_t \cdot b))_{\Big| \bmu_t = b / \norm{b}} = - \varepsilon^2 (\alpha^2
  + 1) \norm{b} dt.
\end{equation*}
As $|\bmu_t| = 1$, the stochastic integral is a true martingale nd we easily get
\begin{align*}
  \E[\bmu_t \cdot b]' & = \alpha (|b|^2 - \E[(\bmu_t \cdot b)^2])  - \varepsilon^2
  (\alpha^2 +1) \E[\bmu_t \cdot b ]  \\
  \E[\bmu_t \cdot b] - \E[\bmu_0 \cdot b] \expp{-\varepsilon^2(\alpha^2+1) t} &=  
  \expp{-\varepsilon^2(\alpha^2+1) t}  \int_0^t \alpha (|b|^2 - \E[(\bmu_s \cdot b)^2]) 
  \expp{\varepsilon^2(\alpha^2+1) s} ds \\
  \limsup_{t \rightarrow +\infty} \E[\bmu_t \cdot b] & = \frac{|b|^2
  \alpha}{\varepsilon^2 (\alpha^2+1)}- \liminf_{t \rightarrow
  +\infty}  \expp{-\varepsilon^2(\alpha^2+1) t}  \int_0^t \alpha \E[(\bmu_s \cdot b)^2]) 
  \expp{\varepsilon^2(\alpha^2+1) s} ds  \\
  \limsup_{t \rightarrow +\infty} \E[\bmu_t \cdot b] & \le
  \frac{\alpha}{\varepsilon^2 (\alpha^2+1)} \left( |b|^2 
  - \liminf_{t \rightarrow +\infty}    \E[(\bmu_t \cdot b)^2] \right)
\end{align*}
where the last inequality comes from Lemma~\ref{lem:liminfexp}. This implies
that $b$ cannot be an equilibrium point of the stochastic system $(\bmu_t)_t$.
Moreover, if $\bmu_t \cdot b$ were converging to some deterministic value $l \in
[-|b|, |b|]$, we would get $l = \frac{\alpha}{\varepsilon^2(\alpha^2+1)} (|b|^2
- l^2)$. The only physically acceptable solution would be $l = \frac{1}{2} \left(
\sqrt{4 |b|^2 +\frac{\varepsilon^4(\alpha^2+1)^2}{\alpha^2}} -
\frac{\varepsilon^2(\alpha^2+1)}{\alpha} \right) < |b|$.

Relying on a Stratonovich rule for writing the perturbation does not enable us
to get a stochastic system converging to the stable equilibrium of the
deterministic ODE. Actually, we can even prove that the whole sphere but a small
northern cap is positive recurrent.

\begin{prop}
  For all $|b| > \delta > \inv{2}\left(\sqrt{4 |b|^2
  +\frac{\varepsilon^4(\alpha^2+1)^2}{\alpha^2}} -
  \frac{\varepsilon^2(\alpha^2+1)}{\alpha}   \right)$, the set $\left\{x \in
  \sphere \: : \: x \cdot b \le  \delta \right\}$ is positive
  recurrent.  Moreover, if $\tau_\delta = \inf \{ t>0 : \bmu_t \cdot b \le
  \delta \}$, 
  \begin{align*}
    \P(\tau_\delta > t \;|\; \bmu_0 \cdot b > \delta ) = O(t^{-1}).
  \end{align*}
\end{prop}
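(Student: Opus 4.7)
My plan is to reduce the statement to a one-dimensional Foster--Lyapunov estimate on the projected process $X_t := \bmu_t \cdot b$, whose dynamics have already been computed in the discussion preceding the proposition:
$$dX_t = m(X_t)\, dt + dM_t, \qquad m(x) := \alpha(\norm{b}^2 - x^2) - \varepsilon^2(\alpha^2+1) x,$$
where $M$ denotes the scalar martingale part of $d(\bmu_t \cdot b)$. Two observations drive the argument. First, $m'(x) = -2\alpha x - \varepsilon^2(\alpha^2+1) < 0$ for every $x \ge 0$, so $m$ is strictly decreasing on $[0, \norm{b}]$; and its unique positive root is exactly the threshold $l^* = \inv{2}\bigl(\sqrt{4\norm{b}^2 + \varepsilon^4(\alpha^2+1)^2/\alpha^2} - \varepsilon^2(\alpha^2+1)/\alpha\bigr)$ that appears in the hypothesis. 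Therefore, for any $\delta$ with $l^* < \delta < \norm{b}$ and any $x \in [\delta, \norm{b}]$,
$$m(x) \le m(\delta) =: -c_\delta, \qquad c_\delta = \alpha(\delta^2 - \norm{b}^2) + \varepsilon^2(\alpha^2+1)\,\delta > 0.$$

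Second, a short quadratic-variation calculation (using $|u \wedge v|^2 = |u|^2|v|^2 - (u \cdot v)^2$ together with the fact that $L(\bmu_t)b = \bmu_t \wedge b$ is orthogonal to $(\bmu_t \cdot b)\bmu_t - b = \bmu_t \wedge (\bmu_t \wedge b)$, so the cross term vanishes) gives
$$d\langle M, M\rangle_t = \varepsilon^2(1+\alpha^2)(\norm{b}^2 - X_t^2)\, dt \le \varepsilon^2(1+\alpha^2)\norm{b}^2\, dt.$$
Since the integrand of the stochastic integral defining $M$ is bounded on $\sphere$, $M$ is a genuine (not merely local) martingale.

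The conclusion is then a one-line Dynkin computation. Taking expectations in $X_{t\wedge\tau_\delta} = X_0 + \int_0^{t\wedge\tau_\delta} m(X_s)\, ds + M_{t\wedge\tau_\delta}$ at the bounded stopping time $t\wedge\tau_\delta$, the drift bound and optional stopping give $\E[X_{t\wedge\tau_\delta}] \le X_0 - c_\delta\, \E[t\wedge\tau_\delta]$. By continuity of $X$ we have $X_{t\wedge\tau_\delta} \ge \delta$, and this rearranges to $\E[t\wedge\tau_\delta] \le (X_0 - \delta)/c_\delta$. Monotone convergence then yields $\E[\tau_\delta \mid \bmu_0 \cdot b > \delta] \le (\norm{b} - \delta)/c_\delta < \infty$, which is the desired positive recurrence, and Markov's inequality gives $\P(\tau_\delta > t \mid \bmu_0 \cdot b > \delta) \le \E[\tau_\delta]/t = O(t^{-1})$. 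I do not anticipate any real obstacle here: once the strict sign of the drift on $[\delta, \norm{b}]$ is in hand, everything reduces to a routine supermartingale argument, with the boundedness of $\bmu_t$ on the unit sphere automatically handling the integrability conditions required for optional stopping.
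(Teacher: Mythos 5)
Your proposal is correct and follows essentially the same route as the paper: bound the drift of $\bmu_t\cdot b$ above by its (negative) value at $\delta$ on $\{\bmu_s\cdot b>\delta\}$, stop at $t\wedge\tau_\delta$, use $\bmu_{t\wedge\tau_\delta}\cdot b\ge\delta$ to get a uniform bound on $\E[t\wedge\tau_\delta]$, and conclude $\E[\tau_\delta]<\infty$ and the $O(t^{-1})$ tail. Your version is in fact slightly streamlined (direct monotone convergence plus Markov's inequality, rather than the paper's decomposition $\E[t\wedge\tau_\delta]=\E[\tau_\delta\ind{\tau_\delta<t}]+t\,\P(\tau_\delta>t)$), but the argument is the same.
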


\begin{proof}
  Since $(\bmu_t)_t$ is an homogeneous Markov process, it is sufficient to prove
  that ${\E[\tau_\delta \;|\; \bmu_0 \cdot b > \delta] < +\infty}$.  Assume
  $\bmu_0 \cdot b > \delta$. Using Doob's stopping time theorem, we can write
  \begin{align*}
    \bmu_{t \wedge \tau_\delta} \cdot b - \bmu_0 \cdot b & = \int_0^{t \wedge \tau_\delta} -
    \left\{ \alpha \left( (\bmu_s \cdot b)^2 - \norm{b}^2 \right) + \varepsilon^2
    (\alpha^2 +1) \bmu_s \cdot b \right\} ds \\
    & \quad - \int_0^{t \wedge \tau_\delta} \varepsilon \big( -L(\bmu_s) b + \alpha
    ((\bmu_s \cdot b) \bmu_s -b) \big) \cdot dW_s. \\
    \E[\bmu_{t \wedge \tau_\delta} \cdot b] - \E[\bmu_0 \cdot b] & = \int_0^t \E\left[
    \ind{s \le \tau_\delta} 
    \left\{ \alpha \left( -(\bmu_s \cdot b)^2 + \norm{b}^2 \right) - \varepsilon^2
    (\alpha^2 +1) \bmu_s \cdot b \right\}\right] ds 
  \end{align*}
  For $s \le \tau_\delta$, $\bmu_s \cdot b > \delta$. Hence,
  \begin{align}
    \label{eq:Ebmu_stopping}
    \delta - \E[\bmu_0 \cdot b] \le \E[\bmu_{t \wedge \tau_\delta} \cdot b] -
    \E[\bmu_0 \cdot b] & \le  \E[t \wedge \tau_\delta] \left\{ \alpha \left(
    -\delta^2 + \norm{b}^2 \right) - \varepsilon^2 (\alpha^2 +1) \delta \right\}
  \end{align}
  For $\delta > \inv{2}\left(\sqrt{4 |b|^2
  +\frac{\varepsilon^4(\alpha^2+1)^2}{\alpha^2}} -
  \frac{\varepsilon^2(\alpha^2+1)}{\alpha}   \right)$, the term $\left\{ \alpha
  \left( -\delta^2 + \norm{b}^2 \right) - \varepsilon^2 (\alpha^2 +1)
  \delta \right\} < 0$. If we let $t$ go to infinity in
  Equation~\eqref{eq:Ebmu_stopping}, we deduce that $\lim_{t \rightarrow +\infty}
  \E[t \wedge \tau_\delta] < +\infty$. Using the monotone convergence theorem, we obtain
  that $ \E[\lim_{t \rightarrow +\infty}t \wedge \tau_\delta] = \lim_{t \rightarrow
  +\infty} \E[t \wedge \tau_\delta]   < +\infty$. Hence,
  \begin{align}
    \label{eq:lim_strato}
    \lim_{t \rightarrow +\infty} \E[\tau_\delta \ind{\tau_\delta < t}] + t \P(\tau_\delta > t) <
    +\infty 
  \end{align}
  Moreover $\lim_{t \rightarrow +\infty} \E[\tau_\delta \ind{\tau_\delta < t}] =  \E[\tau_\delta
  \ind{\tau_\delta < +\infty}]$ and $\lim_{t \rightarrow +\infty} \P(\tau_\delta > t) = \P(\tau_\delta
  = +\infty)$. This implies that $\P(\tau_\delta < +\infty) = 1$. If we plug this result
  back into Equation~\eqref{eq:lim_strato}, we also deduce that ${\E[\tau_\delta] < +\infty}$.
\end{proof}

\paragraph{The physical phenomena modelled by the Itô and the Stratonovich
rules.} Whereas, these tow kinds of stochastic perturbations both model thermal
effects, these are of completely different natures. As the time homogeneous
dynamics obtained from the Stratonovich rule suggests it, the physical
environment does not evolve with time, meaning that the temperature remains
constant over time. Hence, the Stratonovich approach is well suited to model a
constant injection of heat into the system. 

On the contrary, the Itô approach describes the evolution of a model starting
with a fixed amount of heat with no more heat injection after time $0$. This
slow decrease of the system temperature over time explains why we came up with
an SDE with time decreasing coefficients.

\section{Main results: long time behaviour}
\label{sec:long_time}

\subsection{Almost sure convergence}

In this part, we prove the almost sure convergence of $\mu_t$ to $b / |b|$ when
$t$ goes to infinity. This is achieved by studying the pathwise behaviour of the
process $(\mu_t \cdot b)_t$.

\begin{thm}
  \label{thm:cv-ps}
  $\displaystyle \lim_{t \longrightarrow \infty} \mu_t \cdot b = \norm{b}$ a.s.
\end{thm}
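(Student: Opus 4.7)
I would work with the scalar process $X_t := \mu_t \cdot b$, whose SDE is \eqref{eq:sde_scalar}. Its equilibria are $\pm \norm{b}$ (both drift and diffusion vanish there) and the goal is to show that the stable one, $\norm{b}$, is the a.s.\ limit. I would look for a Lyapunov function $\psi$ on $(-\norm{b},\norm{b})$ such that Ito's formula applied to $\psi(X_t)$ collapses to a clean, integrable deterministic drift. The natural candidate, dictated by the identity $\psi'(x)(\norm{b}^2 - x^2) = \text{const}$, is
\[
  \psi(x) = \ln\frac{\norm{b} - x}{\norm{b} + x},
\]
which diverges to $-\infty$ at $x = \norm{b}$ and to $+\infty$ at $x = -\norm{b}$.

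A direct computation of $\psi'$ and $\psi''$ and substitution into Ito's formula (localized via $\tau_n = \inf\{t : \norm{b}^2 - X_t^2 \le 1/n\}$ to justify the calculation) should produce a striking cancellation: the contribution of the drift $-X_t h'(t)/h(t)$ is exactly compensated by the Ito correction $\frac{1}{2}\psi''(X_t)\, d\langle X\rangle_t$ thanks to the relation $h'/h = \varepsilon^2(\alpha^2+1)/h^2$, leaving only
\[
  d\psi(X_t) = -\frac{2\norm{b}\alpha}{h(t)}\, dt + dM_t,
\]
where $M_t$ is a local martingale with $d\langle M\rangle_t = 4 \norm{b}^2 \varepsilon^2 (1+\alpha^2) / [(\norm{b}^2 - X_t^2)\, h(t)^2]\, dt$. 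Integrating and using $\int_0^t h(s)^{-1}\,ds = (h(t)-1)/(\varepsilon^2(\alpha^2+1))$ yields
\[
  \psi(X_t) = \psi(X_0) - \frac{2\norm{b}\alpha}{\varepsilon^2(\alpha^2+1)}\,(h(t)-1) + M_t.
\]
Since $h(t) \sim \sqrt{2\varepsilon^2(\alpha^2+1)\,t}$ diverges, the conclusion $X_t \to \norm{b}$ (equivalent to $\psi(X_t) \to -\infty$) would follow once I establish $M_t = o(h(t))$ a.s.

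To control $M_t$, I would distinguish two cases. On $\{\langle M\rangle_\infty < \infty\}$ the martingale converges a.s.\ to a finite limit and the conclusion is immediate. On $\{\langle M\rangle_\infty = +\infty\}$ I would apply the strong law of large numbers for continuous local martingales, which gives $M_t/\langle M\rangle_t \to 0$ a.s., and combine it with a self-consistent bound on $\langle M\rangle_t$ obtained by writing $\norm{b}^2 - X_t^2 = \norm{b}^2/\cosh^2(\psi(X_t)/2)$ and plugging the integrated SDE back into the quadratic-variation integrand. The main technical obstacle will be the degeneracy of $\psi$ at $\pm \norm{b}$ and the need to rule out that $\mu_t$ hits the unstable equilibrium $-b/\norm{b}$ in finite time; the latter is a single point of $\sphere$ and the diffusion of $\mu_t$ remains non-degenerate in directions tangent to the level sets of $x \cdot b$, so a standard non-attainability argument for a single point under a genuinely two-dimensional diffusion should suffice. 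Once these regularity issues are handled, the argument above produces $\mu_t \cdot b \to \norm{b}$ almost surely.
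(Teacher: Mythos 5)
Your Itô computation is correct and the cancellation is real: with $\psi(x)=\ln\frac{\norm{b}-x}{\norm{b}+x}$ one gets exactly $d\psi(\mu_t\cdot b)=-\frac{2\alpha\norm{b}}{h(t)}dt+dM_t$ with the quadratic variation you state. The gap is in the step ``establish $M_t=o(h(t))$ a.s.'': this intermediate claim is not just unproved by your two-case plan, it is in fact false, so the route cannot be closed. First, the case $\{\langle M\rangle_\infty<\infty\}$ is vacuous: on it $M_t$ converges, hence $\norm{b}-\mu_t\cdot b\lesssim \expp{-c\,h(t)}$ eventually, which makes the integrand $4\norm{b}^2\varepsilon^2(1+\alpha^2)/[(\norm{b}^2-(\mu_t\cdot b)^2)h(t)^2]$ of $\langle M\rangle$ blow up like $\expp{c\,h(t)}/h(t)^2$ and forces $\langle M\rangle_\infty=\infty$, a contradiction; so everything rests on the SLLN case. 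There, $M_t=o(\langle M\rangle_t)$ (or the sharper $O(\sqrt{\langle M\rangle_t\log\log\langle M\rangle_t})$) only helps if $\langle M\rangle_t$ grows no faster than roughly $h(t)^2$, and no such bound is available: the quadratic-variation density is proportional to $1/(\norm{b}^2-(\mu_t\cdot b)^2)$, which diverges precisely because the convergence you are trying to prove occurs. The ``self-consistent'' bootstrap is circular, and the true behaviour rules it out: Theorem~\ref{thm:E_mu_rate} shows $h(t)(\norm{b}-\mu_t\cdot b)$ stays of order one (limit $\frac{\varepsilon^2(1+\alpha^2)}{2\alpha}>0$), i.e. $\norm{b}-\mu_t\cdot b\asymp 1/h(t)$ rather than $\expp{-c\,h(t)}$, so $\psi(\mu_t\cdot b)\approx-\ln h(t)$ and your exact identity forces $M_t\approx \frac{2\alpha\norm{b}}{\varepsilon^2(1+\alpha^2)}h(t)-\ln h(t)$, i.e. $M_t$ is genuinely of order $h(t)$. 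What is actually needed is the much finer statement $M_t-\frac{2\alpha\norm{b}}{\varepsilon^2(1+\alpha^2)}h(t)\to-\infty$, i.e. that the martingale does not cancel the drift, and bounds based on $\langle M\rangle$ alone cannot deliver this; it is essentially the original problem restated. (The non-attainability of $\pm b/\norm{b}$ you invoke is fine and is a minor point.)

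The structural reason your transformation fails is instructive when compared with the paper's proof: the paper never composes with a function whose derivative explodes at the target. It keeps $\mu_t\cdot b$ itself, whose martingale part has integrand bounded by $C\sqrt{\norm{b}^2-(\mu_t\cdot b)^2}/h(t)$, i.e. \emph{vanishing} at the equilibrium; an a priori bound on $\sup_t h(t)^{-1}\int_0^t\E[\norm{b}^2-(\mu_u\cdot b)^2]du$, obtained from the ODE satisfied by $\E[\mu_t\cdot b]$, combined with Doob's inequality shows this martingale converges a.s.\ (Lemma~\ref{lem:martingale}); the theorem then follows from a pathwise drift argument using the non-integrability of $h'/h$ and of $1/h$. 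In your setup the factor $\psi'$ trades that vanishing integrand for an exploding one, which is exactly what destroys the integrability that the paper's argument exploits. To salvage a Lyapunov-type proof you would need either a bounded concave transform (so that the martingale part remains controllable), or an argument at the level of the process $Z_t=h(t)(\norm{b}-\mu_t\cdot b)$ showing $Z_t=o(h(t))$, neither of which is supplied in the proposal.
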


To prove this result, we need a preliminary result stating that the stochastic
integral in Equation~\eqref{eq:sde_scalar} actually vanishes at
infinity.
\begin{lemma}
  \label{lem:martingale}
  $$
  \sup_t \int_0^t \frac{1}{h(u)} \big( -\mu_u \wedge  b + \alpha ((\mu_u \cdot
  b) \mu_u -b) \big) \cdot dW_u < \infty \quad a.s.
  $$
\end{lemma}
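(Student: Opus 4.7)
The plan is to prove the stronger fact that the local martingale
\begin{equation*}
M_t := \int_0^t \frac{1}{h(u)}\bigl(-\mu_u \wedge b + \alpha((\mu_u\cdot b)\mu_u - b)\bigr)\cdot dW_u
\end{equation*}
converges almost surely to a finite limit as $t\to\infty$, which immediately yields $\sup_t M_t<\infty$ a.s. By the convergence theorem for $L^2$--bounded continuous martingales, it is enough to show that $\langle M\rangle_\infty<\infty$ almost surely.

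First I would compute $\langle M\rangle_t$ explicitly. Writing $Z_u := \mu_u\cdot b$, the two vectors $-\mu_u\wedge b$ and $(\mu_u\cdot b)\mu_u - b$ are mutually orthogonal: both are perpendicular to $\mu_u$, and their scalar product vanishes because $\mu_u\wedge b\perp b$. Lagrange's identity moreover shows that each has squared Euclidean norm $|b|^2 - Z_u^2$. Hence
\begin{equation*}
\langle M\rangle_t = (1+\alpha^2)\int_0^t \frac{|b|^2 - Z_u^2}{h(u)^2}\,du.
\end{equation*}
The trivial bound $|Z_u|\le|b|$ is not sufficient, since $\int_0^\infty h(u)^{-2}\,du = +\infty$; one must exploit the dissipation that forces $Z_u^2\to |b|^2$.

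The key step is a semimartingale decomposition of the auxiliary process $h(t) Z_t$. Applying Itô's product rule to~\eqref{eq:sde_scalar}, the finite-variation term $-Z_t h'(t)/h(t)\,dt$ is exactly cancelled by the contribution $h'(t) Z_t\,dt$ from the product rule, leaving
\begin{equation*}
d\bigl(h(t) Z_t\bigr) = \alpha(|b|^2 - Z_t^2)\,dt + dN_t,
\end{equation*}
where $N$ is a true $L^2$ martingale (its integrand $\varepsilon(\mu_t\wedge b - \alpha(Z_t\mu_t - b))$ is bounded on $\sphere$). Taking expectation and setting $F(t):=\int_0^t(|b|^2 - Z_s^2)\,ds\ge 0$, I obtain $\alpha\E[F(t)] = \E[h(t) Z_t] - h(0) Z_0 \le h(t)|b| + |b|$, so that $\E[F(t)] = O(h(t))$.

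To conclude, I would apply Tonelli twice. Since $h(t) h'(t) = \varepsilon^2(\alpha^2+1)$, one computes $\int_s^\infty h(t)^{-4}\,dt = \bigl(2\varepsilon^2(\alpha^2+1)h(s)^2\bigr)^{-1}$. Exchanging the orders of integration gives
\begin{equation*}
\E\!\left[\int_0^\infty \frac{|b|^2 - Z_s^2}{h(s)^2}\,ds\right] = 2\varepsilon^2(\alpha^2+1)\int_0^\infty \frac{\E[F(t)]}{h(t)^4}\,dt < \infty,
\end{equation*}
the finiteness following from the bound on $\E[F(t)]$ and the convergence of $\int_0^\infty h(t)^{-3}\,dt$. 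Hence $\langle M\rangle_\infty<\infty$ a.s., $M$ converges a.s. to a finite limit, and the lemma follows. The main obstacle is precisely the gap between $|Z_u|\le|b|$ and the needed integrability: one has to find the right Lyapunov-type object (here $h(t) Z_t$, whose non-negative drift $\alpha(|b|^2 - Z_t^2)$ is controlled by the deterministic envelope $h(t)|b|$) in order to convert an integrated bound on $F(t)$ into the pathwise finiteness of the quadratic variation.
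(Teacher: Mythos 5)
Your proof is correct and follows essentially the same route as the paper: both arguments hinge on showing $\E[\langle M\rangle_\infty]<\infty$ via the key bound $\E\int_0^t\big(|b|^2-(\mu_s\cdot b)^2\big)\,ds=O(h(t))$, which you extract from the drift of $h(t)\,\mu_t\cdot b$ exactly as the paper does by taking expectations in Equation~\eqref{eq:sde_scalar}. The remaining differences are cosmetic — you convert that bound into finiteness of the bracket by Tonelli with the kernel $h^{-4}$ and conclude through a.s. convergence of an $L^2$-bounded continuous martingale, whereas the paper uses an integration by parts and Doob's maximal inequality.
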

\vskip1em

\begin{proof}[Proof of Theorem~\ref{thm:cv-ps}]
  From Lemma~\ref{lem:martingale}, we know that 
  \begin{align*}
    \sup_t \int_0^t \frac{1}{h(u)} \big( -\mu_u \wedge  b + \alpha ((\mu_u \cdot
    b) \mu_u -b) \big) \cdot dW_u < \infty \quad a.s.
  \end{align*}
  Hence, we can define for all $t \ge 0$
  \begin{align*}
    X_t = \mu_t \cdot b - \int_t^\infty \frac{\varepsilon}{h(u)} \big( -\mu_u
    \wedge  b + \alpha ((\mu_u \cdot b) \mu_u -b) \big) \cdot dW_u 
  \end{align*}
  Let $\norm{b} > \delta >0$ be chosen close to $0$. There exists $T$ such that
  for all $t \ge T$, $\norm{X_t - \mu_t \cdot b} \le \delta$. Moreover from
  Equation~\eqref{eq:sde_scalar}, we can deduce that for all $t > s > T$
  \begin{align}
    \label{eq:X_eds}
    X_t - X_s =  \int_s^t - \mu_u \cdot b \frac{h'(u)}{h(u)}
    -  \frac{\alpha}{h(u)} ( (\mu_u \cdot b)^2 - \norm{b}^2 ) du.
  \end{align}
  Let $\eta < \norm{b}$ be chosen close to $\norm{b}$.  On the set
  $\{0<\mu_u\cdot b <\eta\}$ we have $\norm{\mu_u\wedge b}^2\geq
  \norm{b}^2-\eta^2$. Thus,
  $$
  -  (\mu_u \cdot b) \frac{h'(u)}{h(u)} + \frac{\alpha}{h(u)} \norm{\mu_u \wedge
  b}^2 \ge - \norm{b}  \frac{h'(u)}{h(u)} + \alpha \frac{\norm{b}^2-\eta^2}{h(u)}.
  $$
  We can always choose $T$ such that for all $u > T$,
  $$
  - \norm{b}  \frac{h'(u)}{h(u)} + \alpha \frac{\norm{b}^2-\eta^2}{h(u)}\geq
  \alpha \frac{\norm{b}^2-\eta^2}{2h(u)}.
  $$
  Hence, on the set $\{0<\mu_u\cdot b <\eta\}$,
  \begin{align}
    \label{eq:lower1}
    -  (\mu_u \cdot b) \frac{h'(u)}{h(u)} + \frac{\alpha}{h(u)} \norm{\mu_u \wedge
  b}^2 \ge \alpha \frac{\norm{b}^2-\eta^2}{2h(u)}.
  \end{align}
  On the set $\{\mu_u \cdot b \le 0\}$ we have 
  \begin{align}
    \label{eq:lower2}
    - (\mu_u \cdot b) \frac{h'(u)}{h(u)} + \frac{\alpha}{h(u)} \norm{\mu_u \wedge
    b}^2 & \ge ( - (\mu_u \cdot b) + \alpha \norm{\mu_u \wedge
    b}^2) \frac{h'(u)}{h(u)} \nonumber \\
    &  \ge  \min(\norm{b}, \alpha \norm{b}^2) \frac{1}{2}\frac{h'(u)}{h(u)}.
  \end{align}

  The last inequality comes from the fact that if $\pi/2 \leq x \leq 3\pi/2$, we have
  either $-\cos(x) \ge \sqrt{2}/2$ or  $|\sin(x)| \ge \sqrt{2}/2$.

  Therefore, by combining Equations~\eqref{eq:lower1} and~\eqref{eq:lower2}, we
  find that there exists $\bar T \ge T$, such that for all $t \ge \bar T$,  on
  the event $\{\mu_t \cdot b < \eta\}$ we have   $- (\mu_t \cdot b)
  \frac{h'(t)}{h(t)} + \frac{\alpha}{h(t)} \norm{\mu_t \wedge b}^2 \ge c
  \frac{h'(t)}{h(t)}$ where $c$ is a positive real constant depending on $\eta$.
  Therefore, we deduce from Equation~\eqref{eq:X_eds}
  \begin{align*}
    X_t - X_s & \ge - \norm{b}\int_s^t \frac{h'(u)}{h(u)} \ind{\mu_u \cdot b > \eta}
    du + c \int_s^t  \frac{h'(u)}{h(u)} \ind{\mu_u \cdot b \le \eta} du \\
    & \ge - \norm{b}\int_s^t \frac{h'(u)}{h(u)} \ind{X_u > \eta - \delta}
    du + c \int_s^t  \frac{h'(u)}{h(u)} \ind{X_u \le \eta - \delta} du.
  \end{align*}
  The values of $\delta$ and $\eta$ can always be chosen that $0 < \eta - 2
  \delta < \norm{b}$.

  Assume that $X_s \le  \eta - 2 \delta$. Then, for all $t \ge s$ such that for
  all $u \in [s,t]$,  $X_u \le \eta - \delta$, we have
  \begin{align*}
    X_t - X_s & \ge c \int_s^t  \frac{h'(u)}{h(u)} du
  \end{align*}
  As $h'/h$ is not integrable, $t$ must be finite, which means there exists $t
  \ge s$ such that $X_t > \eta - \delta > \eta - 2 \delta$.  Hence, we can
  assume that $X_s >  \eta - 2 \delta$. 
  
  Either, for all $t \ge s $, $X_t > \eta - 2 \delta$, or thanks to the
  continuity of $X$ there exists $t_0$ for which $X_{t_0} = \eta - 2 \delta$ and
  we have just seen that, in this case, there exists $t \ge t_0$ such that $X_t
  \ge \eta - \delta$ and for all $u \in [t_0, t]$ $\eta -  \delta \ge X_u > \eta
  - 2 \delta$. This reasoning enables us to prove that for all $t \ge s$, $X_t
  \ge \eta - 2 \delta$, ie.  $\mu_t \cdot b \ge \eta - 3 \delta$.  As $\eta$ can
  be chosen arbitrarily close to $\norm{b}$ and $\delta$ arbitrarily small, this
  proves the almost sure convergence of $\mu_t \cdot b$ to $\norm{b}$.
\end{proof}

\begin{proof}[Proof of Lemma~\ref{lem:martingale}]
  From Doob's inequality we have
  \begin{align}
    \label{eq:doob}
    \E\left[ \sup_t \norm{\int_0^t \frac{1}{h(u)} \big( -\mu_u \wedge  b + \alpha ((\mu_u \cdot
    b) \mu_u -b) \big) \cdot dW_u }^2 \right] \le \nonumber \\
    \int_0^\infty \frac{1}{h(u)^2} \E\left[ (\norm{b}^2 - \norm{\mu_u \cdot
    b}^2)  \right] (1+\alpha^2) du.
  \end{align}
  Now, we will prove that the r.h.s is finite.

  From Equation~\eqref{eq:sde_scalar}, we get after integrating and taking the
  expectation for all $t > 0$
  \begin{align*}
    \E[\mu_t \cdot b] - \E[\mu_0 \cdot b] & = - \int_0^t \E[\mu_u \cdot b] \frac{h'(u)}{h(u)} 
    -  \frac{\alpha}{h(u)} \E\left[ (\mu_u \cdot b)^2 - \norm{b}^2 \right] du  \\
    \E[\mu_t \cdot b]' h(t) + \E[\mu_t \cdot b] h'(t) & =  \alpha
    \E\left[ \norm{b}^2 - (\mu_t \cdot b)^2 \right] \\
    \E[\mu_t \cdot b] - \frac{h(s)}{h(t)} \E[\mu_0 \cdot b] & =
    \frac{\alpha}{h(t)} \int_0^t  \E\left[ \norm{b}^2 - (\mu_u \cdot b)^2
    \right] du
  \end{align*}
  Hence, we deduce that
  \begin{align*}
    \sup_t \frac{1}{h(t)} \int_0^t  \E\left[ \norm{b}^2 - (\mu_u \cdot b)^2
    \right] du < \frac{2 \norm{b}}{\alpha} = \kappa.
  \end{align*}
  Let us consider the upper--bound in Equation~\eqref{eq:doob} truncated to $t$
  and perform an integration by parts to obtain
  \begin{align*}
    \int_0^t &\frac{1}{h(u)^2} \E\left[ (\norm{b}^2 - \norm{\mu_u \cdot
    b}^2)  \right]  du  \\
    = & \left[ \frac{1}{h(u)^2} \int_0^u \E\left[ (\norm{b}^2 -
    \norm{\mu_v \cdot b}^2)  \right] dv \right]_0^t + \int_0^t \frac{2
    h'(u)}{h(u)^3} \int_0^u \E\left[ (\norm{b}^2 -
    \norm{\mu_v \cdot b}^2)  \right] dv du   \\
    & \le \kappa \frac{1}{h(t)}  + 2 \kappa \int_0^t \frac{h'(u)}{h(u)^2} du \\
    & \le \kappa \left( \inv{h(t)} + 2 \left( \inv{h(0)} - \inv{h(t)} \right)
    \right) \le \frac{2 \kappa}{h(0)}
  \end{align*}
  This proves that the r.h.s of Equation~\eqref{eq:doob} is finite and ends the
  proof of Lemma~\ref{lem:martingale}.
\end{proof}

\subsection{Convergence rate}
\label{ss-cvgce-rate}

In this section, we are interested in the behaviour of $h(t) (\norm{b} - \mu_t
\cdot b)$.  In particular, we establish the rate of decrease of the $\L^1$ norm
of $\norm{b} - \mu_t \cdot b$ to zero.

\begin{thm}
  \label{thm:E_mu_rate}
  $\displaystyle \lim_{t \longrightarrow \infty} \E[ h(t) \norm{\norm{b} - \mu_t \cdot b} ] =
  \frac{ \varepsilon^2 (1+\alpha^2)}{2\alpha}$.
\end{thm}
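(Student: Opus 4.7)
Since $|\mu_t|=1$ forces $\mu_t\cdot b\le \norm{b}$, we have $|\norm{b}-\mu_t\cdot b|=\norm{b}-\mu_t\cdot b=:Z_t\ge 0$, so the statement reduces to proving $\lim_{t\to\infty} h(t)\E[Z_t] = \ell$ with $\ell := \varepsilon^2(1+\alpha^2)/(2\alpha)$. Taking expectations in~\eqref{eq:sde_scalar} (the stochastic integral is a true martingale because $|\mu_t|=1$ bounds its integrand by a deterministic constant), multiplying through by $h(t)$, and using $\norm{b}^2-(\mu_t\cdot b)^2 = 2\norm{b}Z_t - Z_t^2$ together with $h'(t) = \varepsilon^2(1+\alpha^2)/h(t)$, one derives the linear ODE
\begin{equation*}
  A'(t) + \frac{2\alpha\norm{b}}{h(t)}\bigl(A(t)-\ell\bigr) = \alpha\,\E[Z_t^2], \qquad A(t):=h(t)\,\E[Z_t].
\end{equation*}

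Solving by variation of constants with integrating factor $\phi(t) := \exp\!\bigl(2\alpha\norm{b}(h(t)-1)/(\varepsilon^2(1+\alpha^2))\bigr)$, which satisfies $\phi'(t) = (2\alpha\norm{b}/h(t))\phi(t)$ and diverges to $+\infty$, yields
\begin{equation*}
  A(t) - \ell = \frac{(A(0)-\ell)\phi(0)}{\phi(t)} + \frac{\alpha}{\phi(t)}\int_0^t \phi(s)\,\E[Z_s^2]\,ds.
\end{equation*}
The first summand vanishes. For the second, rewrite $\phi(s)\E[Z_s^2] = h(s)\E[Z_s^2]\cdot\phi'(s)/(2\alpha\norm{b})$; splitting the integral at an arbitrarily large $T$ then shows that if $h(s)\E[Z_s^2]\to 0$ as $s\to\infty$, this summand also tends to $0$. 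It therefore suffices to prove $h(t)\E[Z_t^2]\to 0$.

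This last estimate is the main obstacle. Applying Itô's formula to $Z_t^2$ with the quadratic variation $d\langle Z\rangle_t = \varepsilon^2(1+\alpha^2)(2\norm{b}Z_t - Z_t^2)/h(t)^2\,dt$ (obtained using that $L(\mu_t)b$ is orthogonal to both $\mu_t$ and $b$), one finds that the moments $M_k(t):=\E[Z_t^k]$ satisfy
\begin{equation*}
  M_2'(t) + \Bigl(\tfrac{3\varepsilon^2(1+\alpha^2)}{h(t)^2} + \tfrac{4\alpha\norm{b}}{h(t)}\Bigr)M_2(t) = \tfrac{4\norm{b}\varepsilon^2(1+\alpha^2)}{h(t)^2}M_1(t) + \tfrac{2\alpha}{h(t)}M_3(t).
\end{equation*}
The crude pathwise bound $M_3 \le 2\norm{b}M_2$ unfortunately cancels the dominant $4\alpha\norm{b}/h$ dissipation. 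To recover sharp decay I plan to work with the variance $V(t) := M_2(t) - M_1(t)^2$: subtracting $2 M_1(t) M_1'(t)$ and using the identity $M_3 - M_1^3 = \E[(Z_t - M_1(t))^3] + 3 M_1(t) V(t)$ to express the cross-moment contribution through $V$ itself, one obtains an ODE for $V$ which, combined with the preliminary bound $M_1(t) = O(1/h(t))$ (from a first integrating-factor pass on the ODE for $A$ using $M_2\le 2\norm{b}M_1$), yields $V(t) = O(1/h(t)^2)$ via a further integrating-factor argument. Consequently $M_2(t) = V(t) + M_1(t)^2 = O(1/h(t)^2)$, so $h(t)M_2(t)\to 0$, and combining gives $A(t) \to \ell$.
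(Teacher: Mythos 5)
Your first half coincides with the paper's own argument: the ODE for $A(t)=h(t)\E[\norm{b}-\mu_t\cdot b]$, the integrating factor $\expp{\frac{2\alpha\norm{b}}{\varepsilon^2(1+\alpha^2)}(h(t)-h(0))}$, and the reduction of everything to the single estimate $h(t)\E[Z_t^2]\to 0$ (this is exactly the paper's Lemma~\ref{lem:speed}, and your splitting-at-$T$ step is Lemma~\ref{lem:intexp}). The gap is in the second half, where you propose to obtain $h(t)\E[Z_t^2]\to 0$ by a moment/variance closure. First, the ``preliminary bound $M_1(t)=O(1/h(t))$'' does not follow from the pass you describe: inserting $M_2\le 2\norm{b}M_1=2\norm{b}A/h$ into the ODE for $A$ cancels the dissipative term $-\frac{2\alpha\norm{b}}{h}A$ exactly (the same cancellation you yourself flagged for $M_2$), leaving only $A'(t)\le \frac{2\alpha\norm{b}\ell}{h(t)}$ and hence $A(t)\le A(0)+\norm{b}(h(t)-1)$, i.e.\ the trivial $M_1=O(1)$; note also that $M_1=O(1/h)$ is essentially the upper-bound half of the theorem itself, so assuming it is close to circular. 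Second, the variance route reproduces the same cancellation one level up: writing $m=M_1$, $V=M_2-m^2$, $\kappa_3=\E[(Z_t-m)^3]$, one finds
\begin{equation*}
  V'(t)+\frac{2\alpha\,(2\norm{b}-2m)}{h(t)}V(t)+\frac{3\varepsilon^2(1+\alpha^2)}{h(t)^2}V(t)
  =\frac{\varepsilon^2(1+\alpha^2)}{h(t)^2}\bigl(2\norm{b}m-m^2\bigr)+\frac{2\alpha}{h(t)}\kappa_3,
\end{equation*}
and the only bound on $\kappa_3$ available from $0\le Z_t\le 2\norm{b}$ is $\kappa_3\le(2\norm{b}-m)V$, which turns the net drift coefficient into $+\frac{2\alpha m}{h}V\ge 0$: all dissipation is lost, so the ``further integrating-factor argument yielding $V=O(1/h^2)$'' is not substantiated. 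Any attempt to do better at this point requires quantitative tail control of $\P(\norm{b}-\mu_t\cdot b>\delta)$, which nothing in your sketch supplies.

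The paper closes this exact gap by a soft argument rather than a quantitative closure: apply It\^o to $X_t=h(t)(\norm{b}-\mu_t\cdot b)^2$, take expectations, and use the already established almost sure convergence of Theorem~\ref{thm:cv-ps} together with the bounded convergence theorem to make the source terms $\E[(\norm{b}-\mu_t\cdot b)^2]$ and $\E[(\norm{b}-\mu_t\cdot b)(3\norm{b}+\mu_t\cdot b)]$ vanish, then Lemma~\ref{lem:intexp}, Lemma~\ref{lem:liminfexp} and Fatou's lemma to conclude $\limsup_t\E[X_t]\le 0$, hence $\E[X_t]\to 0$ without ever needing a rate. You should either import that argument (note you never invoke Theorem~\ref{thm:cv-ps}, which is the key input) or supply a genuinely new quantitative estimate on the tails of $\norm{b}-\mu_t\cdot b$; as written, the step $h(t)\E[Z_t^2]\to 0$ is not proved.
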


To prove this Theorem, we need the following lemma.
\begin{lemma}
  \label{lem:speed}
  $\displaystyle \lim_{t \rightarrow +\infty} \E \left[ h(t) (|b| - \mu_t \cdot
  b)^2 \right] = 0$.
\end{lemma}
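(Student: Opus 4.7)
The strategy is to derive a linear ODE for $u(t) := h(t)\E[(|b|-\mu_t\cdot b)^2]$ via It\^o's formula and analyse its asymptotics through an integrating-factor/L'H\^opital argument. Setting $Z_t := |b|-\mu_t\cdot b$, $V_t := Z_t(2|b|-Z_t) = |b|^2-(\mu_t\cdot b)^2$, and $k := \varepsilon^2(1+\alpha^2)$, the identity $h'(t)/h(t) = k/h(t)^2$ together with the quadratic-variation formula $|\sigma_t|^2 = kV_t/h(t)^2$ encoded in~\eqref{eq:sde_scalar} yield, after It\^o on $h(t)Z_t^2$ and taking expectations,
\begin{equation*}
u(t) = Z_0^2 + 2k\int_0^t\frac{\E[V_s]}{h(s)}\,ds - 2\alpha\int_0^t\E[Z_sV_s]\,ds,
\end{equation*}
and, after using $\E[Z_sV_s] = 2|b|u(s)/h(s) - \E[Z_s^3]$, the equivalent linear first-order ODE
\begin{equation*}
u'(t) + \frac{4\alpha|b|}{h(t)}\,u(t) = \frac{2k\,\E[V_t]}{h(t)} + 2\alpha\,\E[Z_t^3].
\end{equation*}

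I first bootstrap a uniform bound on $u$. The estimate $\int_0^t\E[V_s]\,ds \leq \kappa h(t)$ with $\kappa = 2|b|/\alpha$ (proved inside Lemma~\ref{lem:martingale}) combined with integration by parts gives $\int_0^t\E[V_s]/h(s)\,ds = O(\log h(t))$, whence the preliminary $u(t) = O(\log h(t))$. The Cauchy--Schwarz bound $\E[V_s] \leq 2|b|\sqrt{u(s)/h(s)}$ then renders $\E[V_s]/h(s)$ integrable on $[0,\infty)$, so $u$ is uniformly bounded; combined with $\E[Z_sV_s] \geq 0$, this forces $\int_0^\infty\E[Z_sV_s]\,ds < \infty$ and the existence of a finite limit $L := \lim_{t\to\infty} u(t)$.

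Next, with the integrating factor $\mu(t) := \exp(4\alpha|b|(h(t)-1)/k)\to\infty$, L'H\^opital's rule applied to the explicit solution of the linear ODE---using $\E[V_t]\to 0$ by bounded convergence via Theorem~\ref{thm:cv-ps}---yields $L = \lim_{t\to\infty} h(t)\E[Z_t^3]/(2|b|)$. Combined with the inequality $\E[Z_t^3] \leq 2|b|\E[Z_t^2] = 2|b|u(t)/h(t)$, this forces $\limsup h(t)\E[Z_t^3] = \liminf h(t)\E[Z_t^3] = 2|b|L$, and hence $\E[Z_t^3]/\E[Z_t^2]\to 2|b|$. To conclude $L = 0$, note that this ratio limit would require the $Z_t^2$-weighted distribution of $Z_t$ to concentrate asymptotically on $\{2|b|\}$, so that $\E[Z_t^2] \sim 4|b|^2\,\P(Z_t \geq 2|b|-\eta)$ for any small $\eta$. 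I rule this out via a parallel integrating-factor analysis on the higher moments $\E[Z_t^{2p}]$ for $p\geq 2$, whose stronger damping coefficient $2p\alpha|b|/h(t)$ gives $\E[Z_t^{2p}] = O(h(t)^{-p})$; Chebyshev then yields $\P(Z_t > \eta) \leq \E[Z_t^{2p}]/\eta^{2p} = O(h(t)^{-p}) = o(\E[Z_t^2])$ for $p\geq 2$, contradicting the concentration required by $L > 0$.

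\textbf{Main obstacle.} The delicate step is the final tail estimate: running the integrating-factor analysis on the higher moments $\E[Z_t^{2p}]$ requires simultaneously controlling the odd moments $\E[Z_t^{2p+1}]$ that appear in the forcing terms, which is handled by a parallel bootstrap across the whole family of moments.
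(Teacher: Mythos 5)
Your It\^o computation is sound: the identity $u'(t) = 2h'(t)\E[V_t] - 2\alpha\E[Z_tV_t]$, the rewriting with damping $4\alpha|b|/h(t)$, and the preliminary bound $u(t)=O(\log h(t))$ from $\int_0^t\E[V_s]\,ds\le \kappa h(t)$ are all correct, and up to that point you are on the same track as the paper (which also applies It\^o to $X_t=h(t)(|b|-\mu_t\cdot b)^2$ and runs an integrating-factor argument). The argument breaks at the bootstrap. From $u(s)=O(\log h(s))$, Cauchy--Schwarz only gives $\E[V_s]/h(s)\le 2|b|\sqrt{u(s)}\,h(s)^{-3/2}=O\bigl(\sqrt{\log s}\,s^{-3/4}\bigr)$, which is \emph{not} integrable on $[0,\infty)$, and iterating the estimate does not improve matters. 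Worse, the claimed integrability is simply false: since $h(t)\E[|b|-\mu_t\cdot b]$ tends to a positive constant (Theorem~\ref{thm:E_mu_rate}), one has $\E[V_s]/h(s)\asymp 1/s$, so $\int_0^\infty\E[V_s]/h(s)\,ds=+\infty$. Boundedness of $u$ cannot be obtained by discarding the nonpositive term $-2\alpha\int_0^t\E[Z_sV_s]\,ds$ and bounding the positive forcing alone; it comes precisely from cancellation between two divergent integrals. Hence the existence of your limit $L$, the finiteness of $\int_0^\infty\E[Z_sV_s]\,ds$, and everything downstream are unsupported. Even granting $L$, two further steps are gapped: L'H\^opital needs $\lim_t h(t)\E[Z_t^3]$ to exist, which you do not know -- the one-sided versions combined with $h\E[Z_t^3]\le 2|b|u(t)$ pin down only $\limsup_t h(t)\E[Z_t^3]=2|b|L$, not the liminf, so $\E[Z_t^3]/\E[Z_t^2]\to 2|b|$ does not follow. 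And the higher-moment bound $\E[Z_t^{2p}]=O(h(t)^{-p})$ is asserted, not proved: the damping term in the ODE for $\E[Z_t^{2p}]$ is $-\tfrac{2p\alpha}{h(t)}\E[Z_t^{2p}(2|b|-Z_t)]$, which degenerates exactly on the concentration-near-$2|b|$ event you are trying to exclude; the ``parallel bootstrap across the whole family of moments'' you defer to is essentially the whole difficulty of the lemma, not a technical afterthought.

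The paper avoids all of this with a sign observation instead of an asymptotic identification of the cubic term. It keeps that term grouped as $-\tfrac{2\alpha}{h(t)}\E[X_t(\mu_t\cdot b)]$ (your $2\alpha\E[Z_t^3]$ is the piece you split off it) and uses the integrating factor $\expp{\beta h(t)}$ with $\beta=2\alpha|b|/(\varepsilon^2(1+\alpha^2))$; the only genuine forcing terms are expectations that tend to $0$ by Theorem~\ref{thm:cv-ps} and bounded convergence, and they are killed by Lemma~\ref{lem:intexp}. The leftover term has a favourable sign: by Lemma~\ref{lem:liminfexp} and Fatou, $\limsup_t\E[X_t]\le-\beta\,\E[\liminf_t X_t]\le 0$, and $X_t\ge0$ concludes -- no boundedness bootstrap, no third moment, no higher moments. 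If you want to repair your route, the fix is essentially that: do not treat $2\alpha\E[Z_t^3]$ as a forcing to be identified in the limit, but keep it attached to the nonnegative damping $\E[Z_t^2(2|b|-Z_t)]$ and use the a.s. convergence $Z_t\to0$ together with Fatou to control its liminf.
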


\begin{proof}[Proof of Lemma~\ref{lem:speed}]
  We define the process $X$ by $X_t = h(t)   (|b| - \mu_t \cdot
  b)^2$ and apply Itô's formula to find
  \begin{align*}
    dX_t & = h'(t)  (|b| - \mu_t \cdot b)^2 dt + 2 h(t)  (|b| - \mu_t \cdot b)
    d(-\mu_t \cdot b) + h(t) d<\mu \cdot b>_t \\
    &= \frac{h'(t)}{h(t)} X_t dt +
    2 (|b| - \mu_t \cdot b) \left( h'(t) (\mu_t \cdot b) + \alpha (|b|^2 -
    (\mu_t \cdot b)^2) \right) dt  \\
    & + 2 \varepsilon (|b| - \mu_t \cdot b) \left(
    - \mu_t \wedge b + \alpha ((\mu_t \cdot b)\mu_t - b) \right) dW_t +
    \frac{\varepsilon^2 (\alpha^2 + 1)}{h(t)} (|b|^2 - (\mu_t \cdot b)^2) dt.
  \end{align*}
  Then, we integrate and take expectation to obtain
  \begin{align*}
    \E[X_t]' & = \frac{h'(t)}{h(t)} \E[X_t]  + \E \left[ 2 h'(t) (|b| - \mu_t \cdot
    b) (\mu_t \cdot b) \right] + 2 \alpha \E \left[(|b| - \mu_t \cdot b) (|b|^2 -
    (\mu_t \cdot b)^2) \right]\\
    & \quad + h'(t) \E\left[ |b|^2 - (\mu_t \cdot b)^2 \right] \\
    &= - \frac{h'(t)}{h(t)} \E[X_t] + 2 h'(t) |b| \E[(|b| - \mu_t \cdot b)] - 2
    \frac{\alpha}{h(t)} \E \left[(|b| + \mu_t \cdot b) X_t \right]
    + h'(t) \E\left[ |b|^2 - (\mu_t \cdot b)^2 \right] \\
    & = -2 \frac{\alpha}{\varepsilon^2 (\alpha^2+1)} h'(t) |b| \E[X_t] - 
    2 \frac{\alpha}{\varepsilon^2 (\alpha^2+1)} h'(t) \E[X_t (\mu_t \cdot b)]
    - \frac{h'(t)}{h(t)} \E[X_t] \\
    & \quad + h'(t) \E[(|b| - \mu_t \cdot b) (3 |b| + \mu_t
    \cdot b)] 
  \end{align*}
  Let $\beta = 2 |b| \frac{\alpha}{\varepsilon^2 (\alpha^2+1)}$. We integrate the
  previous equation to find
  \begin{align*}
    \E[X_t] - \E[X_0] \expp{-\beta (h(t) - h(0))} & = \expp{-\beta h(t)} \int_0^t
    \Bigg(-\frac{\beta}{|b|} h'(s) \E[X_s (\mu_s \cdot b)] - h'(s) \E[(|b| - \mu_s
    \cdot b)^2] \\
    & \qquad + h'(s) \E[(|b| - \mu_s \cdot b) (3 |b| + \mu_s
    \cdot b)] \Bigg) \expp{\beta h(s)} ds \\
    & = \expp{-\beta h(t)} \int_0^t
    \Bigg(-\frac{\beta}{|b|} h'(s) \E[X_s (\mu_s \cdot b)] - h'(s) \E[(|b| - \mu_s
    \cdot b)^2] \\
    & \qquad + h'(s) \E[(|b| - \mu_s \cdot b) (3 |b| + \mu_s
    \cdot b)] \Bigg) \expp{\beta h(s)} ds.
  \end{align*}
  From Theorem~\ref{thm:cv-ps} combined with the bounded convergence theorem, we
  know that $\E[(|b| - \mu_t \cdot b)^2] $ and $\E[(|b| - \mu_t \cdot b) (3 |b|
  + \mu_t \cdot b)]$ tend to zero when $t$ goes to infinity. Then, we can apply
  Lemma~\ref{lem:intexp} to show that 
  \begin{align*}
   \expp{-\beta h(t)} \int_0^t
    h'(s) \Bigg( - \E[(|b| - \mu_s
    \cdot b)^2] + \E[(|b| - \mu_s \cdot b) (3 |b| + \mu_s
    \cdot b)]  \Bigg) \expp{\beta h(s)} ds \xrightarrow[t \rightarrow
    +\infty]{} 0.
  \end{align*}
  Hence, we get
  \begin{align*}
    \limsup_{t \rightarrow +\infty} \E[X_t] &=-\frac{\beta}{|b|} \liminf_{t
    \rightarrow +\infty} \expp{-\beta h(t)} \int_{h(0)}^{h(t)} \E[X_{h^{-1}(v)}
    (\mu_{h^{-1}(v)} \cdot b)] \expp{\beta v} dv \\
    & \le -\frac{\beta}{|b|} \liminf_{t \rightarrow +\infty} \E[X_{t}
    (\mu_{t} \cdot b)]
  \end{align*}
  where we have used Lemma~\ref{lem:liminfexp} for the last inequality. Finally,
  Fatou's Lemma yields
  \begin{align*}
    \limsup_{t \rightarrow +\infty} \E[X_t]     & \le -\frac{\beta}{|b|}
    \E[\liminf_{t \rightarrow +\infty} X_{t} (\mu_{t} \cdot b)] 
    \le -\beta \E[\liminf_{t \rightarrow +\infty} X_{t} ] \le 0.
  \end{align*}
  Since, $X_t \ge 0$ a.s., we conclude that $\lim_{t \rightarrow +\infty}
  \E[X_t] =0$.
\end{proof}

\begin{proof}[Proof of Theorem~\ref{thm:E_mu_rate}]
  As $\norm{b} - \mu_t \cdot b \ge 0$, the $\L^1$ norm boils down to a basic
  expectation.

  Let us define $\xi_t = \norm{b} - \mu_t \cdot b$ for $t \ge 0$. From
  Equation~\eqref{eq:sde_scalar}, we get
  \begin{align*}
    d\xi_t & = - \xi_t \frac{h'(t)}{h(t)} dt + \norm{b} \frac{h'(t)}{h(t)} dt +
    \frac{\alpha}{h(t)}  \left( (\mu_t \cdot b)^2 - \norm{b}^2 \right) dt -
    \frac{\varepsilon}{h(t)} \big( - \mu_t \wedge b + \alpha ((\mu_t \cdot b)
    \mu_t -b)  \big) \cdot dW_t \\
    & = - \xi_t \frac{h'(t)}{h(t)} dt + \norm{b} \frac{h'(t)}{h(t)} dt -
    \frac{\alpha}{h(t)}  \xi_t (2 \norm{b} - \xi_t) dt -
    \frac{\varepsilon}{h(t)} \big( - \mu_t \wedge b + \alpha ((\mu_t \cdot b)
    \mu_t -b)  \big) \cdot dW_t 
  \end{align*}
  If we introduce $Z_t = h(t) ( \norm{b} - \mu_t \cdot b)$, we can write
  \begin{align*}
    dZ_t = \left( h'(t) \norm{b} - \alpha \xi_t (2 \norm{b} - \xi_t)  \right) dt
    -
    \varepsilon \big( - \mu_t \wedge b + \alpha ((\mu_t \cdot b) \mu_t -b)  \big)
    \cdot dW_t 
  \end{align*}
  From the dynamics of $Y$, we deduce that $\E[Z_t]$ solves the following
  differential equation
  \begin{align*}
    \E[Z_t]' & = \norm{b} h'(t) - \alpha (2 \norm{b} \E[\xi_t] - \E[\xi_t^2]) \\
    \E[Z_t]' & = \norm{b} h'(t) - \alpha 2 \norm{b} \frac{\E[Z_t]}{h(t)} +
    \alpha  \E[\xi_t^2] \\
    \E[Z_t]' & = \norm{b} h'(t) - \alpha \frac{2 \norm{b}}{\varepsilon^2
    (\alpha^2+1) } h'(t) \E[Z_t]  + \alpha \E[\xi_t^2] \\
    \left (\E[Z_t] \expp{ \int_0^t \frac{2\alpha \norm{b}}{\varepsilon^2
    (\alpha^2+1) } h'(u) du} \right)' & = \left( \norm{b} h'(t) + \alpha
    \E[\xi_t^2]  \right) \expp{ \int_0^t \frac{2 \alpha \norm{b}}{\varepsilon^2
    (\alpha^2+1) } h'(u) du}
  \end{align*}
  Now, we can integrate the previous equation to obtain
  \begin{align*}
    \E[Z_t] \expp{ \frac{2\alpha \norm{b}}{\varepsilon^2 (\alpha^2+1) } (h(t) - h(0))}
    - \E[Z_0] & = \norm{b}  \int_0^t h'(u) \expp{ \frac{2 \alpha
    \norm{b}}{\varepsilon^2 (\alpha^2+1) } (h(u) - h(0))} du \\
    & \quad + \alpha \int_0^t
    \E[\xi_u^2]  \expp{ \frac{2\alpha \norm{b}}{\varepsilon^2 (\alpha^2+1) } (h(u) -
    h(0))} du \\
    \E[Z_t]     - \E[Z_0] \expp{ - \frac{2 \alpha\norm{b}}{\varepsilon^2 (\alpha^2+1)
    } (h(t) - h(0))} & = \frac{\varepsilon^2(\alpha^2+1)}{2 \alpha} \left( 1 -
    \expp{-\frac{2\alpha \norm{b}}{\varepsilon^2 (\alpha^2+1) } (h(t) - h(0))} \right)
    \\ & \quad + \alpha \expp{ - \frac{2\alpha \norm{b}}{\varepsilon^2 (\alpha^2+1) }
    (h(t) - h(0))} \int_0^t \E[\xi_u^2]  \expp{ \frac{2\alpha \norm{b}}{\varepsilon^2
    (\alpha^2+1) } (h(u) - h(0))} du
  \end{align*}
  From Theorem~\ref{thm:cv-ps},  we know that $\xi_t$ tends to $0$ a.s., therefore the bounded
  convergence theorem yields that $\lim_{u \longrightarrow \infty}
  \E[\xi_u^2] = 0$. Hence, as $h(t)$ tends to infinity with $t$, it is easy to show that 
  \begin{align*}
    \lim_{t \longrightarrow \infty}  \expp{ - \frac{2 \norm{b}}{\varepsilon^2
    (\alpha^2+1) } (h(t) - h(0))} \int_0^t \E[\xi_u^2]  \expp{ \frac{2
    \norm{b}}{\varepsilon^2 (\alpha^2+1) } (h(u) - h(0))} du = 0.
  \end{align*}
  Then, we can deduce that
  \begin{align*}
    \lim_{t \longrightarrow \infty} \E[Z_t] = \frac{\varepsilon^2(\alpha^2+1)}{2
    \alpha}. 
  \end{align*}
\end{proof}

As a corollary of Theorem~\ref{thm:E_mu_rate}, we can prove the following
results using Markov's inequality.
\begin{corollary}
  For all $0 < \beta < 1/2$ and $\eta > 0$,
  $\P( t^\beta (|b| - \mu_t \cdot b) \ge \eta) \longrightarrow 0$.
\end{corollary}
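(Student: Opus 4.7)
The plan is to apply Markov's inequality after noting the sign of $|b| - \mu_t \cdot b$ and then use Theorem~\ref{thm:E_mu_rate} to control the expectation.

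First I would observe that $|b| - \mu_t \cdot b \ge 0$ a.s. since by Cauchy--Schwarz $|\mu_t \cdot b| \le |\mu_t||b| = |b|$. This lets us apply Markov's inequality directly (not just Chebyshev), yielding for any $\eta > 0$:
\begin{align*}
  \P\bigl( t^\beta (|b| - \mu_t \cdot b) \ge \eta \bigr)
  &= \P\bigl( |b| - \mu_t \cdot b \ge \eta t^{-\beta} \bigr)
  \le \frac{t^\beta}{\eta} \, \E\bigl[|b| - \mu_t \cdot b\bigr].
\end{align*}

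Next I would inject the factor $h(t)$ artificially to invoke Theorem~\ref{thm:E_mu_rate}:
\begin{align*}
  \frac{t^\beta}{\eta} \, \E\bigl[|b| - \mu_t \cdot b\bigr]
  = \frac{t^\beta}{\eta\, h(t)} \, \E\bigl[ h(t) (|b| - \mu_t \cdot b)\bigr].
\end{align*}
By Theorem~\ref{thm:E_mu_rate} the expectation on the right converges to $\varepsilon^2(1+\alpha^2)/(2\alpha)$, hence is bounded in $t$. Combined with the explicit form $h(t) = \sqrt{2\varepsilon^2(\alpha^2+1)t+1} \sim \sqrt{t}$ from~\eqref{eq:h}, the prefactor $t^\beta/h(t)$ behaves like $t^{\beta-1/2}$, which tends to $0$ since $\beta < 1/2$. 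The conclusion follows.

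There is no genuine obstacle here; the only minor point is making sure one goes through $h(t)$ rather than trying to bound $\E[|b|-\mu_t\cdot b]$ directly, since the natural quantitative information provided by the preceding theorem is at the scale $h(t)$, not at a polynomial scale in $t$. The strict inequality $\beta < 1/2$ is what makes the ratio $t^\beta / h(t)$ vanishing; the endpoint $\beta = 1/2$ would only give boundedness in probability, consistent with the fact that Theorem~\ref{thm:E_mu_rate} identifies $\sqrt{t}\,(|b| - \mu_t \cdot b)$ as being of order one in $\L^1$.
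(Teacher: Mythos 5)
Your proof is correct and follows exactly the route the paper intends: the paper's own justification is simply ``using Markov's inequality'' together with Theorem~\ref{thm:E_mu_rate}, which is precisely your argument (Markov on the nonnegative variable $|b|-\mu_t\cdot b$, then bounding $\E[h(t)(|b|-\mu_t\cdot b)]$ by the theorem and using $t^\beta/h(t)\to 0$ for $\beta<1/2$). Nothing further is needed.
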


\section{Hysteresis phenomena}
\label{sec:hyster}

In this section, we want to study the impact of the stochastic perturbation on
the reversibility of the system; we are wondering whether the stochastic part
may induce an hysteresis phenomenon. In order to observe this, the particle is
submitted to an external field linearly varying from $+\bb$ to $-\bb$ where $\bb
\in \sphere$ and with constant direction and bounded modulus. We have seen in
Section~\ref{sec:long_time} that when the external field is fixed, the magnetic
moment $\mu$ asymptotically stabilizes along this field. If the
external field varies sufficiently slowly compared to the stabilization rate of
$\mu$, we expect that $\mu$ will take different back an forth paths when the
external field switches from $+\bb$ to $-\bb$ and then from $-\bb$ to $+\bb$:
this characterizes the hysteretic behaviour of the system.

In order to highlight this property, we will study the evolution of a suitably
rescaled system on the time interval $[0,1]$ and show that the average back and
forth paths of $\mu_t \cdot b$  can not cross at the point $t=1/2$. \\

We consider a two time scale model: a slower scale for the variations of the
external field and a faster scale for the Landau Lifshitz evolution of the
magnetic moment.

Let $\eta >0$ be a fixed time scale and $\bb \in \sphere$ the direction of the
external field. We define the external filed $b^\eta$ linearly varying between
$+\bb$ and $-\bb$ on the interval $[0,1/\eta]$ by
\begin{equation*}
  b^\eta(t)  = (1 - 2 t \; \eta) \; \bb \quad \text{for } t \in [0,1/\eta]
\end{equation*}
We assume that the magnetic moment $\mu^\eta$ is affected by $b^\eta(t)$
according to the following equation for $t \in [0,1/\eta]$
\begin{equation*}
  \begin{cases}
    dY^\eta_t &=  - \mu^\eta_t \wedge (b^\eta(t)\;  dt + \varepsilon\; dW_t) -
    \alpha \mu^\eta_t \wedge \mu^\eta_t \wedge (b^\eta(t) \; dt + \varepsilon \; dW_t)  \\
    \mu^\eta_t & = \frac{Y^\eta_t} {|Y^\eta_t|}\\
    Y^\eta_0&= \bb
  \end{cases}
\end{equation*}
In order to work on the interval $[0,1]$, we introduce rescaled versions of both
the external field and the magnetic moment defined for $t \in [0,1]$.
\begin{align*}
  b(t) = b^\eta(t/\eta), \qquad  Z^\eta_t = Y^\eta_{t/ \eta}, \qquad
  \lambda^\eta_t = \mu^\eta_{t/\eta}.
\end{align*}
Using the time scale property of the stochastic integral, we can write
\begin{align*}
  dZ^\eta_t &=  - \lambda^\eta_t \wedge \left(b(t) \; \frac{1}{\eta} dt + \varepsilon\;
  dW_{t/\eta}\right) - \alpha  \lambda^\eta_t \wedge \lambda^\eta_t \wedge \left(b(t) \;
  \frac{1}{\eta}  dt + \varepsilon \; dW_{t / \eta}\right)  
\end{align*}
From the scaling property of the Brownian motion, we know that $(\sqrt{\eta} W_{t
/ \eta})$ is still a Brownian motion. So we get
\begin{align*}
  dZ^\eta_t &=  - \lambda^\eta_t \wedge \left(b(t) \; \frac{1}{\eta} dt +
  \varepsilon\; \frac{1}{\sqrt{\eta}} dW_t\right) - \alpha \lambda^\eta_t \wedge
  \lambda^\eta_t \wedge \left(b(t) \; \frac{1}{\eta}  dt + \varepsilon
  \frac{1}{\sqrt{\eta}} \; dW_t\right)  
\end{align*}
It is important to notice that the factor $\eta$ acts as a time scale parameter
for the deterministic part, but that the corresponding scaling parameter for the
stochastic part is $\sqrt{\eta}$.

Following the proof of Proposition~\ref{prop:norm_Y}, it is obvious to show that
$d\norm{Z^\eta_t}^2 = 2 (1+\alpha^2) \varepsilon^2 / \eta \; dt$. Then, we introduce
for $t \in [0,1]$
\begin{align*}
  h^\eta(t) = |Z^\eta_t| = \sqrt{2 (1+\alpha^2) \varepsilon^2 t / \eta + 1}.
\end{align*}

The Ito formula applied to $(\lambda^\eta_t \cdot \bb)_t$ yields
\begin{align*}
  d( \lambda^\eta_t \cdot \bb) & =  - (\lambda^\eta_t \cdot \bb)
  \frac{{h^\eta}^{\prime} (t)}{h^\eta(t)} dt + \alpha
  \frac{1-2t}{\eta h^\eta(t)} \norm{\lambda^\eta_t \wedge \bb}^2 dt  \nonumber \\ 
  & \qquad - \frac{\varepsilon}{\sqrt{\eta} h^\eta(t)} \left( (\lambda^\eta_t
  \wedge  \; dW_t) \cdot \bb + \alpha (\lambda^\eta_t \cdot \bb) 
  (\lambda^\eta_t \cdot dW_t) - \alpha (\bb \cdot dW_t) \right) 
\end{align*}
The stochastic part vanishes when taking expectation as in the previous
section to find
\begin{align}
  \label{eq:E_lambdat_b}
  \E( \lambda^\eta_t \cdot \bb) - \lambda_0 \cdot \bb & =  \int_0^t -
  \E(\lambda^\eta_u \cdot \bb) 
  \frac{{h^\eta}^{\prime} (u)}{h^\eta(u)} +
  \alpha \frac{1-2u}{\eta h^\eta(u)} \E\norm{\lambda^\eta_u \wedge \bb}^2 du  
\end{align}

\begin{prop}
  \label{prop:hyster} 
  For all $t \in [0, 1/2]$, 
  \begin{align*}
    \E(\lambda^\eta_t \cdot \bb) \ge \frac{1}{h^\eta(t)} \ge \frac{1}{\sqrt{1 +
    \frac{(1+\alpha^2) \varepsilon^2}{\eta}}}.
  \end{align*}
\end{prop}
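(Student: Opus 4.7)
The plan is to exploit Equation~\eqref{eq:E_lambdat_b} together with the fact that, on the first half of the interval, the ``field'' term $(1-2u)$ is non-negative, so the only dissipation in the dynamics of $f(t):=\E(\lambda^\eta_t \cdot \bb)$ comes from the $-f(t)\,{h^\eta}'(t)/h^\eta(t)$ part. First I would record the initial value: since $Y^\eta_0 = \bb \in \sphere$ we have $\lambda^\eta_0 = \bb$ and hence $f(0) = 1$.

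Next, I would differentiate Equation~\eqref{eq:E_lambdat_b} (the integrand is continuous in $u$, so $f$ is $C^1$) to write
\begin{align*}
  f'(t) = - f(t)\,\frac{{h^\eta}'(t)}{h^\eta(t)} + \alpha\,\frac{1-2t}{\eta\, h^\eta(t)}\,\E\bigl[\,\norm{\lambda^\eta_t\wedge \bb}^2\,\bigr].
\end{align*}
Denote the last, non-negative-on-$[0,1/2]$ term by $g(t)\ge 0$. Then the equation $f'(t) + f(t)\,{h^\eta}'(t)/h^\eta(t) = g(t)$ can be integrated by the elementary integrating factor $h^\eta(t)$, giving
\begin{align*}
  \bigl(f(t)\,h^\eta(t)\bigr)' = h^\eta(t)\,g(t) \ge 0 \qquad \text{on } [0,1/2].
\end{align*}
Hence $f(t)\,h^\eta(t)$ is non-decreasing on $[0,1/2]$, so $f(t)\,h^\eta(t) \ge f(0)\,h^\eta(0) = 1$, which yields the first inequality $f(t) \ge 1/h^\eta(t)$.

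For the second inequality, it suffices to observe that $h^\eta(t) = \sqrt{1 + 2(1+\alpha^2)\varepsilon^2 t/\eta}$ is increasing in $t$, so for $t \le 1/2$,
\begin{align*}
  h^\eta(t) \le h^\eta(1/2) = \sqrt{1 + \frac{(1+\alpha^2)\varepsilon^2}{\eta}},
\end{align*}
giving the stated lower bound on $1/h^\eta(t)$. There is no real obstacle here; the only thing to be careful about is the sign of the ``forcing'' term $(1-2u)$, which is precisely what restricts the argument to $t\in[0,1/2]$ and gives the hysteresis interpretation (on $[1/2,1]$ this term reverses sign and the same estimate would fail).
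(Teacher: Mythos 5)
Your proof is correct and follows essentially the same route as the paper: differentiate Equation~\eqref{eq:E_lambdat_b}, drop the non-negative forcing term on $[0,1/2]$, and conclude from $(e(t)h^\eta(t))'\ge 0$ together with $e(0)h^\eta(0)=1$. You merely make explicit two details the paper leaves implicit (the initial value and the monotonicity of $h^\eta$ for the second inequality), which is fine.
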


\begin{proof}
  Since the process $\lambda^\eta$ is pathwise continuous and bounded, it is easy to
  show that the deterministic function $e(t): t \longmapsto \E(\lambda^\eta_t \cdot
  b)$ is of class $C^1$. Hence, we can differentiate
  Equation~\eqref{eq:E_lambdat_b}
  \begin{align*}
    e'(t) =  - e(t) \frac{{h^\eta}^{\prime} (t)}{h^\eta(t)} +
    \alpha \frac{1-2t}{\eta h^\eta(t)} \E\norm{\lambda^\eta_t \wedge \bb}^2 
  \end{align*}
  As $t \le 1/2$, the second term on the r.h.s is non--negative, hence
  \begin{align*}
    e'(t) \ge   - e(t) \frac{{h^\eta}^{\prime} (t)}{h^\eta(t)} 
  \end{align*}
  From this inequality, we deduce that $(e(t) h^\eta(t))' \ge 0$, which leads to
  the following lower bound
  \begin{align*}
    e(t) \ge \frac{1}{h^\eta(t)} \quad \text{for } t \le 1/2
  \end{align*}
\end{proof}

\section{Numerical experiments}

In this section, we want to illustrate the theoretical results obtained in
Sections~\ref{sec:long_time} and~\ref{sec:hyster} using some numerical
simulations.

\paragraph{Long time behaviour.} We consider the stochastic
system~\eqref{eq:sys_sto} and discretize it with the help of an Euler scheme
$(\bar{Y},\bar{\mu})$, on a time grid with step size $\delta t>0$.

Figure~\ref{fig:cv_ps_eds} shows the long time behaviour of $(\bar{\mu}_t\cdot
b)_{t\geq 0}$,  for one path of the scheme $(\bar{Y},\bar{\mu})$, with time
step size $\delta t = 0.01$ and for different values of the damping parameter
$\alpha$. The parameter $\varepsilon$ is fixed to $0.1$, we have taken
$\norm{b}=1$, and set $\mu_0=-b$.  The almost sure convergence of $\mu_t \cdot b$
to $|b|$, as stated by Theorem~\ref{thm:cv-ps}, is well illustrated by
Figure~\ref{fig:cv_ps_eds} and one can also see how the parameter $\alpha$
impacts the characteristic time of the system, ie. the time needed to stabilize
around the limit.

Now, we wish to compare the rates of convergence studied in  Subsection
\ref{ss-cvgce-rate} to numerical observations. From Theorem \ref{thm:E_mu_rate},
$\frac{2\alpha \sqrt{2}}{\varepsilon \sqrt{(1+\alpha^2)}} \sqrt{t} \;
\E(\norm{b} - \mu_t.b)$ converges to $1$ when $t$ goes to infinity. This is
illustrated by Figure \ref{fig:cv_rate_E_eds} for different values of $\alpha$.
This figure confirms that decreasing the parameter $\alpha$ leads to a decrease
of the convergence rate of $\E(\norm{b} - \mu_t.b)$.

\begin{figure}[!ht]
  \begin{center}
    \includegraphics[scale=0.6]{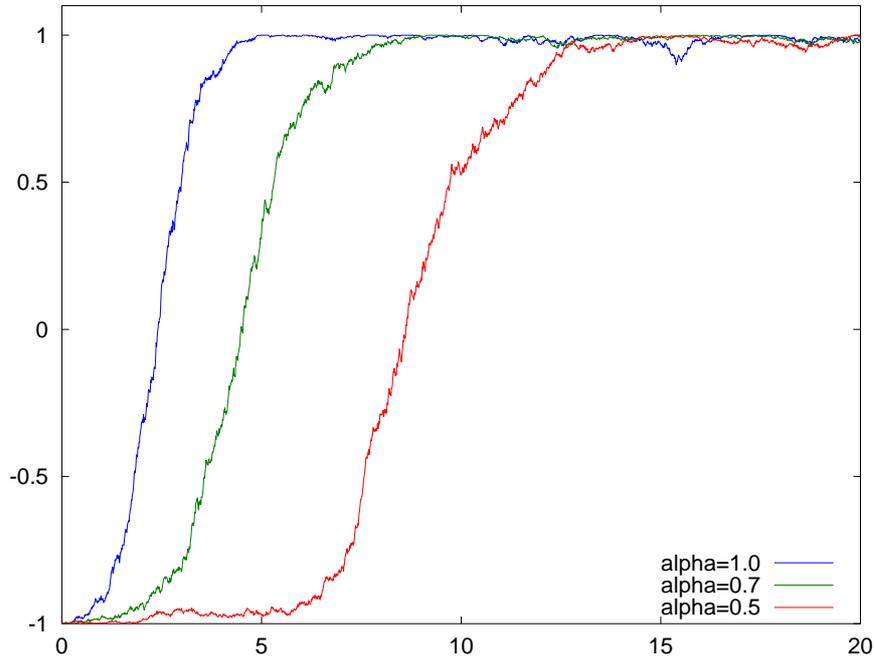}
  \end{center}
  \caption{Almost sure convergence of $\mu_t \cdot b$ with $\mu_0 = -b$,
  $\norm{b}=1$, $\varepsilon = 0.1$.}
  % 2,000 pas de temps pour T=20
  \label{fig:cv_ps_eds}
\end{figure}

\begin{figure}[!ht]
  \begin{center}
    \includegraphics[scale=0.6]{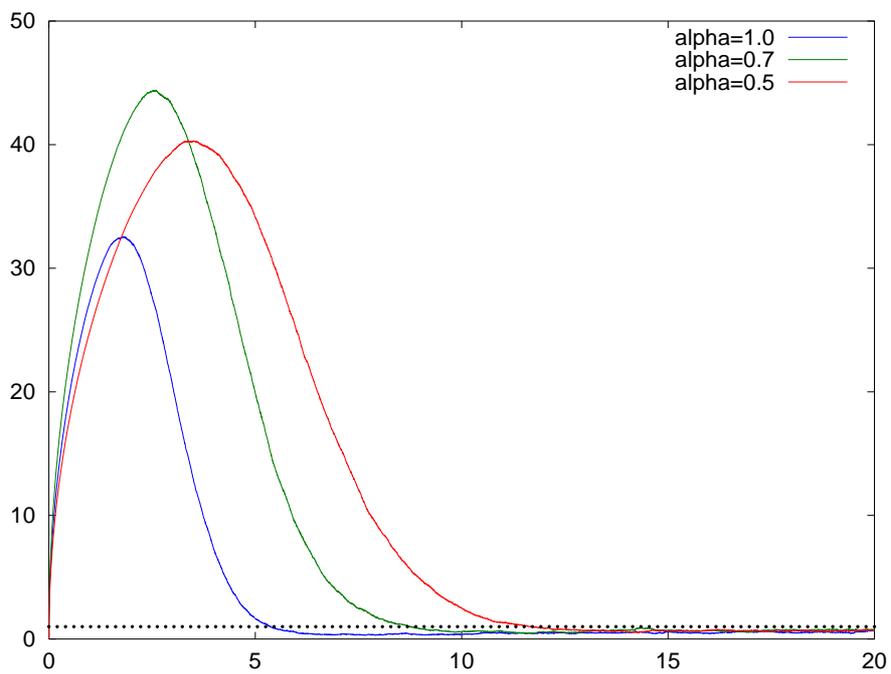}
  \end{center}
  \caption{Convergence of $\frac{2\alpha \sqrt{2}}{\varepsilon
  \sqrt{(1+\alpha^2)}} \sqrt{t} \; \E(\norm{b} - \mu_t.b)$ with $\mu_0 =
  -b$, $\norm{b}=1$ and $\varepsilon = 0.1$. The horizontal dashed
  line is at level one. The expectation is computed using a Monte--Carlo
  method with $100$ samples.}
  % 2,000 pas de temps pour T=20
  \label{fig:cv_rate_E_eds}
\end{figure}

\paragraph{Hysteresis phenomena.}

\begin{figure}[!htbp]
  \begin{center}
    \includegraphics[scale=0.6]{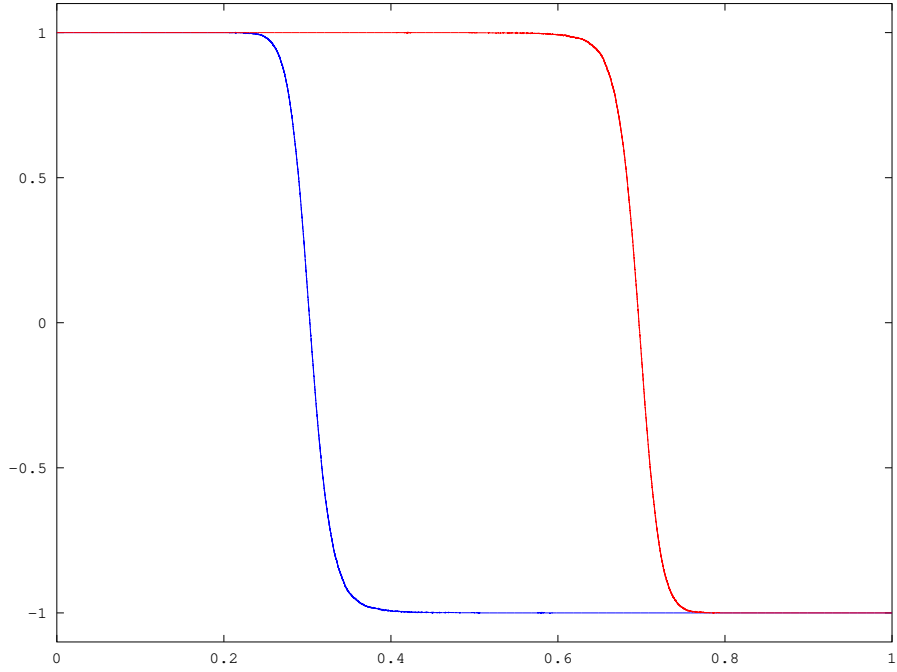}
  \end{center}
  \caption{Pathwise hysteresis phenomena with $\alpha=1$, $\varepsilon = 0.005$
  and $\eta = 0.01$. The red curve is the forward path whereas the blue curve is
  the backward path. The $x$-axis is time.}
  % 500,000 pas de temps 
  \label{fig:hyster_1}
\end{figure}

\begin{figure}[!htbp]
  \begin{center}
    \includegraphics[scale=0.6]{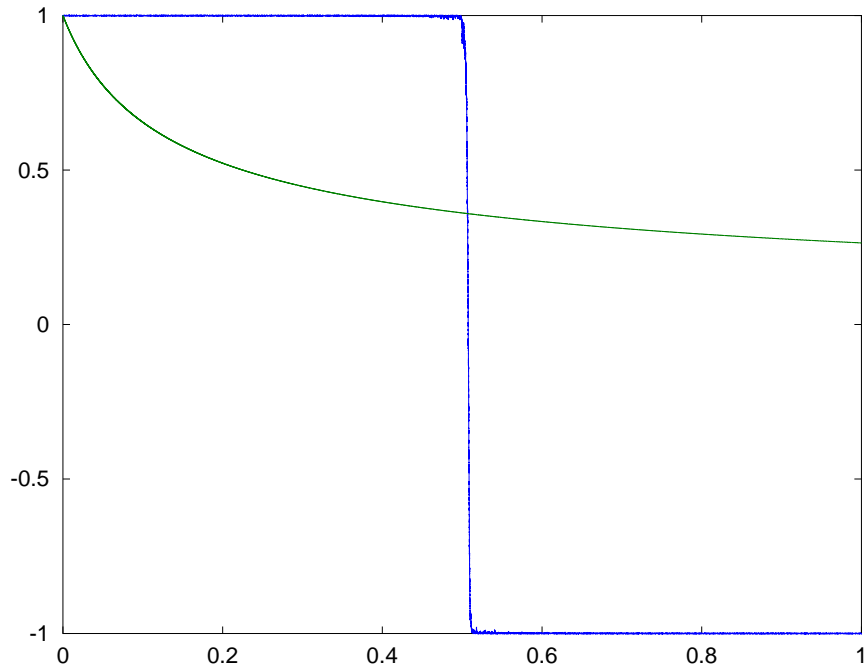}
  \end{center}
  \caption{Pathwise hysteresis phenomena with 
  $\alpha=1$, $\varepsilon = 0.01$ and $\eta = 3.1E-5$. The blue curve is the
  evolution of $\mu_t \cdot \bb$ and the green curve is $1/h^\eta(t)$. The
  $x$-axis is time.}
  % 500,000 pas de temps 
  \label{fig:hyster_2}
\end{figure}
\begin{figure}[!htbp]
  \begin{center}
    \includegraphics[scale=0.6]{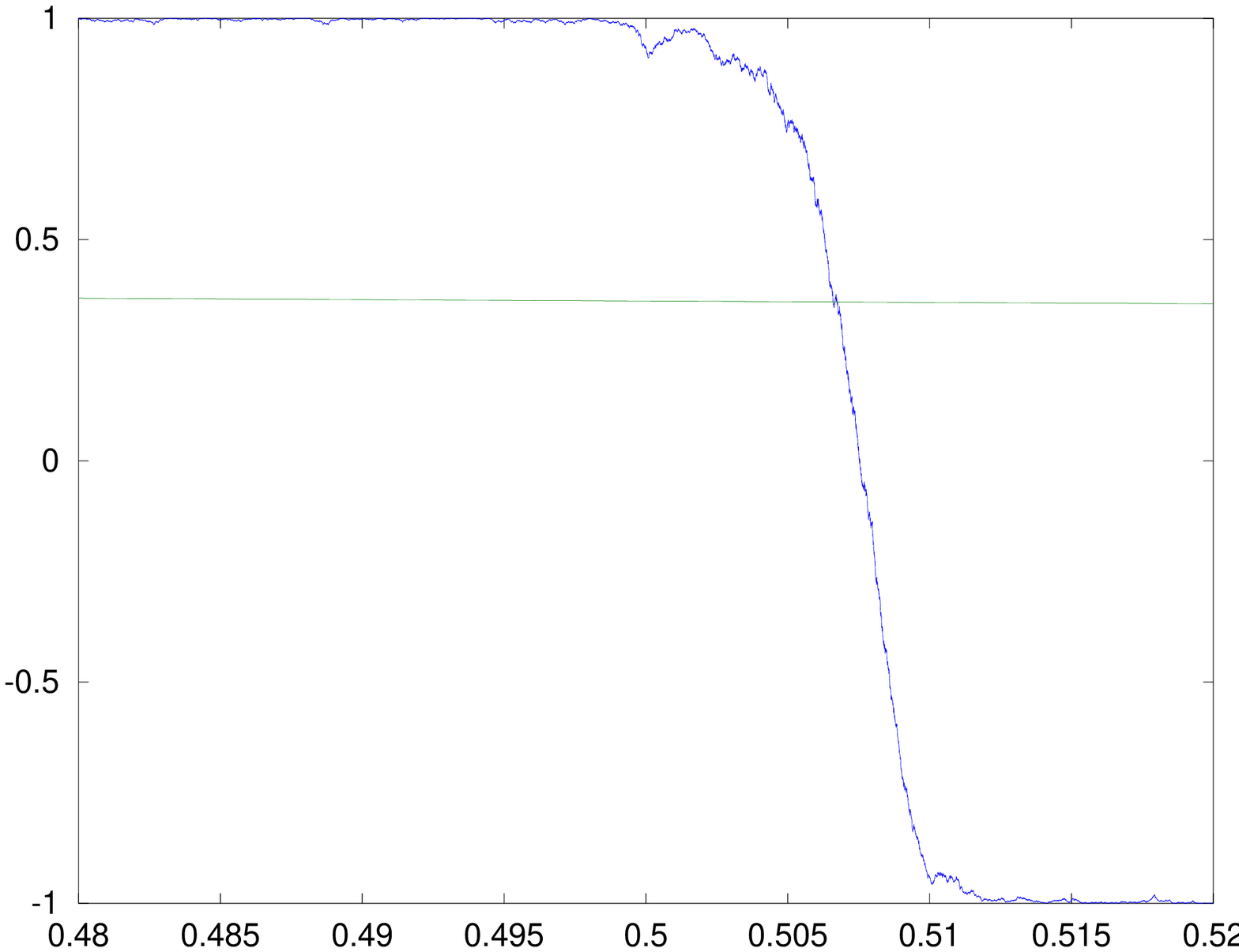}
  \end{center}
  \caption{Zoom of Figure~\ref{fig:hyster_2} around $t=1/2$. The blue curve is the
  evolution of $\mu_t \cdot \bb$ and the green curve is $1/h^\eta(t)$.}
  % 500,000 pas de temps 
  \label{fig:zoom_hyster_2}
\end{figure}

On Figure~\ref{fig:hyster_1}, we can observe a typical pathwise hysteresis
phenomenon, which not only illustrates Proposition~\ref{prop:hyster} but also
suggests that the result of this Proposition could be well improved by proving a
almost sure lower bound (probably for sufficiently small values $\eta$). On
Figure~\ref{fig:hyster_1}, the forward path (red curve) is almost stuck to the
value $1$ on the interval $[0,1]$, we could then be tempted to think that the
lower bound of Proposition~\ref{prop:hyster} lacks some accuracy. 

On the contrary, when $\eta$ becomes small, which corresponds to a slower scale
for the variations of the external field, Figures~\ref{fig:hyster_2}
and~\ref{fig:zoom_hyster_2} show a very sharp revolving around $t=1/2$. The
evolution of $\E \lambda^\eta_t \cdot \bb$ is very steep in the neighborhood of
$1/2$, which corresponds to a change of sign of $b(t)$. By closely looking at
Figure~\ref{fig:zoom_hyster_2}, we notice that the lower bound $1/h^\eta(t)$ is
crossed for $t$ slightly larger than $1/2$. This phenomenon will be all the more
pronounced as $\eta$ goes to zero. In that sense, the lower bound $1/h^\eta(t)$
becomes nearly optimal for $t$ lower but close to $1/2$.

\section{Conclusion}

In this article, we analyzed the long time behaviour of the SDE modeling the
evolution of a magnet submitted to a perturbated external field.  The rate of
convergence of the magnetic moment is particularly interesting and holds for any
dissipation coefficient $\alpha >0$. This result has been obtained by combining
the ODE technique with Itô's formula. The second result concerns the hysteresis
behaviour of the system induced by the stochastic perturbation.  These two
results illustrate the dissipative effects of a stochastic perturbation on a
ferromagnet by giving a first glimpse on how thermal effects can be modeled in the
framework of micromagnetism.

\clearpage

\appendix

\begin{lemma}\label{lem:intexp}
  Let $a >0$ and $f$ be a continuous function such that $\lim_{t \rightarrow
  +\infty} h(t) f(t) = 0$. Then, $\displaystyle \expp{-a h(t)} \int_0^t f(u)
  \expp{a h(u)} du \xrightarrow[t \rightarrow +\infty]{} 0$.
\end{lemma}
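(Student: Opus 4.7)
The plan is to apply L'H\^opital's rule to the $\infty/\infty$ ratio
\begin{equation*}
  \frac{\int_0^t f(u) \expp{a h(u)} du}{\expp{a h(t)}},
\end{equation*}
whose denominator tends to $+\infty$ since $a>0$ and $h(t)\to +\infty$. Differentiating top and bottom with respect to $t$ gives the ratio of derivatives
\begin{equation*}
  \frac{f(t) \expp{a h(t)}}{a h'(t) \expp{a h(t)}} = \frac{f(t)}{a h'(t)} = \frac{h(t) f(t)}{a \varepsilon^2 (\alpha^2+1)},
\end{equation*}
where the last equality uses the elementary identity $h(t) h'(t) = \varepsilon^2 (\alpha^2+1)$, immediate from the explicit form $h(t) = \sqrt{2 \varepsilon^2 (\alpha^2+1) t + 1}$. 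By the hypothesis $h(t) f(t) \to 0$, this quantity tends to $0$ as $t \to +\infty$, and L'H\^opital delivers the conclusion.

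If one prefers a self-contained elementary argument, the change of variable $v = h(u)$, under which $du = v\, dv / (\varepsilon^2(\alpha^2+1))$, rewrites the integral as
\begin{equation*}
  \int_0^t f(u) \expp{a h(u)} du = \frac{1}{\varepsilon^2(\alpha^2+1)} \int_{h(0)}^{h(t)} g(v) \expp{a v} dv,
\end{equation*}
where $g(v) := v f(h^{-1}(v))$ satisfies $g(v) \to 0$ as $v \to +\infty$ by hypothesis. Fixing $\rho > 0$, pick $V$ with $|g(v)| \le \rho$ for $v \ge V$ and split the integral at $V$: the contribution of $[h(0), V]$ multiplied by $\expp{-a h(t)}$ vanishes since $h(t) \to +\infty$, while the contribution of $[V, h(t)]$ is controlled by $\rho \int_V^{h(t)} \expp{a(v - h(t))} dv \le \rho/a$. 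Since $\rho$ is arbitrary, the whole expression tends to $0$.

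There is no substantive obstacle in either route; the entire content of the lemma is encapsulated in the scaling identity $h(t) h'(t) = \mathrm{const}$, which turns the statement into a classical Abelian lemma for exponential weights. I would include the L'H\^opital version in the paper since it is the shortest, and retain the change of variable as a remark only if the preceding proofs (which already invoke similar manipulations) make it natural.
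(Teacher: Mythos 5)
Your proposal is correct. The change-of-variable argument in your second paragraph is exactly the paper's proof: substitute $v=h(u)$, use $h'(s)=\varepsilon^2(\alpha^2+1)/h(s)$ so that the integrand becomes $v f(h^{-1}(v))\expp{a v}/(\varepsilon^2(\alpha^2+1))$ with $v f(h^{-1}(v))\to 0$, and conclude; the paper leaves the final step as ``easy'', whereas you spell out the standard splitting at a level $V$, which is a welcome completion rather than a deviation. The L'H\^opital route is a genuinely shorter alternative not taken in the paper; it is valid, but note one point of precision: the numerator $\int_0^t f(u)\expp{a h(u)}\,du$ need not tend to $+\infty$ (e.g.\ if $f$ is eventually negative), so you should invoke the strong form of L'H\^opital's rule which only requires the denominator to tend to $+\infty$ and its derivative to be nonvanishing --- both true here since $a h'(t)\expp{a h(t)}>0$ and $h(t)\to+\infty$. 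With that phrasing fixed, either route is complete, and both rest on the same scaling identity $h(t)h'(t)=\varepsilon^2(\alpha^2+1)$ that the paper exploits.
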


\begin{proof}
  The function $h$ is a one to one map from $[0, +\infty)$ to $[h(0), +\infty)$.
  Hence, a change of variable yields
  \begin{align*}
    \int_0^t f(u) \expp{a h(u)} du &= \int_{h(0)}^{h(t)}
    \frac{f(h^{-1}(v))}{h'(h^{-1}(v))}  \expp{a v} dv.
  \end{align*}
  Remember that $h'(s) = \frac{\varepsilon^2 (\alpha^2+1)}{h(s)}$. Hence, as the
  function $h^{-1}$ is increasing, we deduce that
  $\frac{f(h^{-1}(v))}{h'(h^{-1}(v))} \xrightarrow[v \rightarrow
  +\infty]{} 0$. Then, it is easy to show the result.
\end{proof}

\begin{lemma}\label{lem:liminfexp}
  Let $a >0$ and $f$ be a continuous function. Then,
  $$
  \liminf_{t \rightarrow +\infty} \expp{-a t} \int_0^t f(u)
  \expp{a u} du \ge \frac{1}{a} \liminf_{t \rightarrow +\infty} f(t).
  $$
\end{lemma}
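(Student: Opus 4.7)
Let $L := \liminf_{t \to +\infty} f(t)$. If $L = -\infty$ there is nothing to prove, so I assume $L > -\infty$ (the case $L = +\infty$ is handled by the same argument applied to arbitrary large truncations). The overall plan is a standard ``split the integral at a threshold beyond which $f$ is essentially bounded below by $L-\varepsilon$'' argument.

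Fix $\varepsilon > 0$. By definition of $\liminf$, there exists $T > 0$ such that $f(u) \ge L - \varepsilon$ for every $u \ge T$. I then write, for $t > T$,
\begin{equation*}
\expp{-a t}\int_0^t f(u)\expp{a u}\,du
= \expp{-a t}\int_0^T f(u)\expp{a u}\,du + \expp{-a t}\int_T^t f(u)\expp{a u}\,du.
\end{equation*}
Since $f$ is continuous, it is bounded on $[0,T]$, so the first term is $O(\expp{-a t})$ and tends to $0$ as $t \to +\infty$. For the second term, the lower bound $f(u) \ge L - \varepsilon$ together with $\expp{a u} > 0$ gives
\begin{equation*}
\expp{-a t}\int_T^t f(u)\expp{a u}\,du
\;\ge\; (L - \varepsilon)\,\expp{-a t}\int_T^t \expp{a u}\,du
\;=\; \frac{L - \varepsilon}{a}\bigl(1 - \expp{-a(t-T)}\bigr),
\end{equation*}
which converges to $(L-\varepsilon)/a$ as $t \to +\infty$. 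Combining the two contributions,
\begin{equation*}
\liminf_{t \to +\infty}\expp{-a t}\int_0^t f(u)\expp{a u}\,du \;\ge\; \frac{L - \varepsilon}{a}.
\end{equation*}
Letting $\varepsilon \to 0^+$ yields the claimed inequality.

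There is essentially no obstacle here; the only mild subtlety is that the bound $f(u) \ge L - \varepsilon$ must be used as an inequality between \emph{signed} quantities, which is legitimate because the weight $\expp{a u}$ is positive, so the sign of $L - \varepsilon$ is irrelevant when multiplying through. The argument does not use any specific structure of the function $h$ from the paper; the lemma is purely a statement about the asymptotics of exponentially weighted integrals, which explains why it is placed in the appendix as a self-contained analytic tool used to justify the $\limsup$ computations in the proofs of Lemma~\ref{lem:speed} and in the Stratonovich analysis.
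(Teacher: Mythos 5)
Your proof is correct and follows essentially the same route as the paper's: split the integral at a threshold $T$ beyond which $f \ge L - \varepsilon$, note that the contribution of $[0,T]$ is killed by the factor $\expp{-a t}$, bound the tail by $(L-\varepsilon)/a$, and let $\varepsilon \to 0$. Your write-up is in fact slightly more careful than the paper's (which contains a sign typo $\le$ where $\ge$ is meant, and does not discuss the infinite-liminf cases), but there is no substantive difference.
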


\begin{proof}
  We define $l = \liminf_{t \rightarrow +\infty} f(t)$. Let $\eta >0$, there
  exists $T>0$, such that for all $t \ge T$, $f(t) \ge l - \eta$.
  \begin{align*}
    \expp{-a t} \int_0^t f(u) \expp{a u} du & =  \expp{-a t} \int_0^T f(u) \expp{a
    u} du  +  \expp{-a t} \int_T^t f(u) \expp{a u} du  \\
    & \ge  \expp{-a t} \int_0^T f(u) \expp{a
    u} du  +  \expp{-a t} \int_T^t (l - \eta) \expp{a u} du  \\
    \liminf_{t \rightarrow +\infty} \expp{-a t} \int_0^t f(u) \expp{a u} du &
    \le \frac{l-\eta}{a}.
  \end{align*}
  As the inequality holds for all $\eta$, the result easily follows.
\end{proof}

\bibliographystyle{abbrvnat}
\bibliography{one-particule}

\begin{thebibliography}{12}
\providecommand{\natexlab}[1]{#1}
\providecommand{\url}[1]{\texttt{#1}}
\expandafter\ifx\csname urlstyle\endcsname\relax
  \providecommand{\doi}[1]{doi: #1}\else
  \providecommand{\doi}{doi: \begingroup \urlstyle{rm}\Url}\fi

\bibitem[Atkinson et~al.(2003)Atkinson, Allwood, Faulkner, Xiong, Cooke, and
  Cowburn]{Atkison:Magnetic}
D.~Atkinson, D.~A. Allwood, C.~C. Faulkner, G.~Xiong, M.~D. Cooke, and R.~P.
  Cowburn.
\newblock Magnetic domain wall dynamics in a permalloy nanowire.
\newblock \emph{IEEE Transactions on Magnetics}, 39\penalty0 (5):\penalty0
  2663--2665, September 2003.

\bibitem[Bovier et~al.(2010)Bovier, den Hollander, and Spitoni]{MR2642889}
A.~Bovier, F.~den Hollander, and C.~Spitoni.
\newblock Homogeneous nucleation for {G}lauber and {K}awasaki dynamics in large
  volumes at low temperatures.
\newblock \emph{Ann. Probab.}, 38\penalty0 (2):\penalty0 661--713, 2010.
\newblock ISSN 0091-1798.
\newblock \doi{10.1214/09-AOP492}.
\newblock URL \url{http://dx.doi.org/10.1214/09-AOP492}.

\bibitem[Brown(1962)]{Brown:magnet}
W.-F. Brown.
\newblock \emph{Magnetostatic Principles in Ferromagnetism}.
\newblock North-Holland, 1962.

\bibitem[Brown(1963)]{Brown:Microm}
W.-F. Brown.
\newblock \emph{Micromagnetics}.
\newblock Interscience Publishers, 1963.

\bibitem[Martinez et~al.(2007)Martinez, Lopez-Diaz, Torres, and
  Garcia-Cervera]{Martinez:Micromagnetic}
E.~Martinez, L.~Lopez-Diaz, L.~Torres, and C.~Garcia-Cervera.
\newblock Micromagnetic simulations with thermal noise: Physical and numerical
  aspects.
\newblock \emph{Journal of Magnetism and Magnetic Materials}, 316:\penalty0
  269--272, 2007.

\bibitem[Mercer et~al.(2011)Mercer, Plumer, Whitehead, and van
  Ek]{Mercer:Atomic}
J.~I. Mercer, M.~L. Plumer, J.~P. Whitehead, and J.~van Ek.
\newblock Atomic level micromagnetic model of recording media switching at
  elevated temperatures.
\newblock \emph{Applied Physics Letters}, 98, 2011.

\bibitem[Raikher and Stepanov(2007)]{Raikher:Magnetization}
Y.~Raikher and V.~Stepanov.
\newblock Magnetization dynamics of single-domain particles by
  superparamagnetic theory.
\newblock \emph{Journal of Magnetism and Magnetic Materials}, 316:\penalty0
  417--421, 2007.

\bibitem[Raikher et~al.(2004)Raikher, Stepanov, and
  Perzynskib]{Raikher:Dynamic}
Y.~Raikher, V.~Stepanov, and R.~Perzynskib.
\newblock Dynamic hysteresis of a superparamagnetic nanoparticle.
\newblock \emph{Physica B}, 343:\penalty0 262--266, 2004.

\bibitem[Rogers and Williams(2000)]{rogers00}
L.~C.~G. Rogers and D.~Williams.
\newblock \emph{Diffusions, {M}arkov processes, and martingales. {V}ol. 2}.
\newblock Cambridge Mathematical Library. Cambridge University Press,
  Cambridge, 2000.

\bibitem[Scholz et~al.(2001)Scholz, Schrefl, and Fidler]{Scholz:Micromagnetic}
W.~Scholz, T.~Schrefl, and J.~Fidler.
\newblock Micromagnetic simulation of thermally activated switching in fine
  particles.
\newblock \emph{Journal of Magnetism and Magnetic Materials}, 233:\penalty0
  296--304, 2001.

\bibitem[Smith(2001)]{Neil:Modeling}
N.~Smith.
\newblock Modeling of thermal magnetization fluctuations in thin-film magnetic
  devices.
\newblock \emph{Journal of Applied Physics}, 90:\penalty0 5768--5773, 2001.

\bibitem[Zheng et~al.(2003)Zheng, Gross, and Li]{Zheng:Atomistic}
G.-P. Zheng, D.~Gross, and M.~Li.
\newblock Atomistic modeling of nanocrystalline ferromagnets.
\newblock \emph{Journal of Applied Physics}, 93\penalty0 (10):\penalty0
  7652--7654, May 2003.

\end{thebibliography}

\end{document}